\theoremstyle{plain}
\newtheorem{thm}{Theorem}[section]
\newtheorem{pro}[thm]{Problem}
\newtheorem{lem}[thm]{Lemma}
\theoremstyle{definition}
\newtheorem{ass}{Assumption}[section]
\newtheorem{rmk}{Remark}[section]
\newcommand{\eps}{\varepsilon}
\newcommand{\R}{\mathbb{R}}
\makeatletter\@addtoreset{equation}{section} \makeatother
\begin{document}

\title{Linear-Quadratic Optimal Control Problems for Mean-Field
 Backward Stochastic Differential Equations with Jumps
\thanks{This work was supported by the Natural Science Foundation of Zhejiang Province
for Distinguished Young Scholar  (No.LR15A010001),  and the National Natural
Science Foundation of China (No.11471079, 11301177) }}

\date{}

\author[a]{Maoning Tang}
\author[a]{Qingxin Meng\footnote{Corresponding author.
\authorcr
\indent E-mail address: mqx@zjhu.edu.cn(Q. Meng)}}

\affil[a]{\small{Department of Mathematical Sciences, Huzhou University, Zhejiang 313000, China}}

\maketitle

\begin{abstract}
\noindent
This paper  is concerned with  a linear
quadratic (LQ, for short) optimal control problem for mean-field backward stochastic
differential equations (MF-BSDE, for short)
driven by a Poisson random martingale measure and a Brownian motion. Firstly, by the classic  convex variation
principle, the existence and
uniqueness of the optimal control is
established.  Secondly, the optimal 
 control is characterized by the stochastic Hamilton system which turns out to be
a linear fully coupled mean-field forward-backward stochastic differential equation with jumps by the duality method. 
Thirdly, in terms of  a decoupling technique, the stochastic Hamilton system is decoupled by
introducing  two Riccati equations and 
a MF-BSDE with  jumps. Then an  explicit representation for the optimal control is obtained.

\end{abstract}

\textbf{Keywords}: Mean-Field; Optimal Control;
Backward Stochastic Differential Equation; Adjoint Process

\section{Introduction}

Stochastic optimal control problems of mean-field type recently are extensively 
studied, due to their comprehensive  practical applications especially in
  economics and finance. 
  Different from  the classic stochastic optimal control problem, the   probability distribution of the state and the control are
  involved in  the coefficients of
   the state equation  and cost  functional
  which leads to    a time-inconsistent
  optimal control problem, so the dynamic programming principle (DPP) is not  effective  and  many  researchers try to solve this type of optimal control problems by discuss
  the stochastic maximum  principle (SMP) instead of trying extensions of DPP. One can refer  to  [1-3],[6],[9], [12],[16], [17], [22],[23],[25-28] for  more results on the stochastic maximum principles for different 
  types of mean-field stochastic models.

In 2013, the continuous-time mean-field LQ problem for  mean-field forward stochastic differential equation (MF-FSDE) were systematically studied  by Yong [29], where the optimal control is represented as a state feedback form by introducing two Riccati differential equations.  Since [29], many advances have been made on LQ control problem for stochastic system of
mean-field type  (cf., for example [8],  [10],
[13]-[15],[18-21],[26]). In 2016,
Tang and  Meng [24] extended  continuous-time mean-field LQ control
 problem to jump diffusion system  and established the corresponding theoretical results.   It is well-knew that  the adjoint equation of a controlled  MF-FSDE is a MF-BSDE. So it is not until Buckdahn,
Djehiche, Li  and Peng [4]; Buckdahn, Li  and Peng [5]
established the theory of the mean-field BSDEs that the stochastic
maximum principle for the optimal control system of mean-field type has become a popular topic.
  Since a BSDE is a well-defied dynamic system
itself and has important applications in mathematical finance, it is necessary and natural to consider the optimal control problem of BSDE. In 2016, Li, Sun and Xiong [11] first studied  the  stochastic LQ problems under continuous-time MF-BSDEs  and  established the corresponding  fundamental theoretical results.
 The purpose of this paper is to
 extend  continuous-time mean-field LQ 
 problem to backward jump diffusion system of mean-field type and establish the corresponding theoretical results.
 We first give notations used throughout  our 
 paper and formulate our LQ problem in section 2. In section 3, we prove  the existence and uniqueness of the optimal control by classic
convex variation principle under standard  assumptions. In section 4, we first establish the dual characterization of the optimal control by stochastic Hamiltonian system.
Here  the stochastic Hamiltonian system turns out to be a
full-coupled  forward-backward stochastic differential equation of mean-field type
with jump, which is very difficult to be solved. Then we introduce two Riccati equations and 
a MF-BSDE to decouple the stochastic Hamiltonian system.
At last, we present explicit formulas of
the optimal controls.

\section{Notations and Formulation of Problem}
\subsection{Notations}
~~~~Let $T$ be a fixed strictly positive real number and  $(\Omega,
\mathscr{F},\{\mathscr{F}_t\}_{0\leq t\leq T}, P)$ be a complete probability space on which a d-dimensional standard Brownian motion $\{W(t), 0\leq t\leq T\}$
is defined. Denote by $\mathscr{P}$
the $\mathscr{F}_t$-predictable $\sigma$-field on $[0, T]\times \Omega$ and by $\mathscr B(\Lambda)$
  the Borel $\sigma$-algebra of any topological space $\Lambda.$ Let $(E,\mathscr B (E), v
)$ be a measurable space with $v(E)<\infty $ and $\eta: \Omega\times D_\eta \longrightarrow E$ be an
$\mathscr F_t$-adapted stationary Poisson point process with characteristic measure $v$, where
 $D_\eta$  is a countable subset of $(0, \infty)$. Then the counting measure induced by $\eta$ is
$$
\mu((0,t]\times A):=\#\{s\in D_\eta; s\leq t, \eta(s)\in A\},~~~for~~~ t>0, A\in \mathscr B (E).
$$
And $\tilde{\mu}(de,dt):=\mu(de,dt)-v(de)dt$ is a compensated Poisson random martingale  measure which
is assumed to be independent of Brownian motion
$\{W(t), 0\leq t\leq T\}$. Assume $\{\mathscr{F}_t\}_{0\leq t\leq T}$ is $P$-completed natural
 filtration generated by $\{W(t),
0\leq t\leq T\}$ and $\{\iint_{A\times (0,t] }\tilde{\mu}(de,ds), 0\leq t\leq T, A\in \mathscr B (E) \}.$  In the following,  we introduce the basic
notations used throughout this paper.

$\bullet$~~$t$: $t\in[0, T)$.

$\bullet$~~$H$: a Hilbert space with norm $\|\cdot\|_H$.

$\bullet$~~$\langle\alpha,\beta\rangle:$ the inner product in
$\mathbb{R}^n, \forall \alpha,\beta\in\mathbb{R}^n.$

$\bullet$~~$|\alpha|=\sqrt{\langle\alpha,\alpha\rangle}:$ the norm
of $\mathbb{R}^n,\forall \alpha\in\mathbb{R}^n.$

$\bullet$~~$\langle A,B\rangle=tr(AB^\top):$ the inner product in
$\mathbb{R}^{n\times m},\forall A,B\in \mathbb{R}^{n\times m}.$
Here denote by $B^\top$ the transpose of a matrix B.

$\bullet$~~$|A|=\sqrt{tr(AA^\top)}:$ the norm of $A$.

$\bullet$~~$S^n:$ the set of all $n\times n$ symmetric matrices.

$\bullet$~~ $S^n_+:$ the subset of all non-negative definite matrices of $S^n.$

$\bullet$~~$S_{\mathscr{F}}^2(t,T;H):$ the space of all $H$-valued
and ${\mathscr{F}}_t$-adapted  c\`{a}dl\`{a}g processes
$f=\{f(t,\omega),\ (t,\omega)\in[t,T]\times\Omega\}$ satisfying
$$\|f\|_{S_{\mathscr{F}}^2(t,T;H)}^2\triangleq{
\mathbb E\bigg[\displaystyle\sup_{t\leq s \leq T}\|f(s)\|_H^2ds}\bigg]<+\infty.$$

$\bullet$~~$ M_{\mathscr{F}}^2(t,T;H):$ the space of all $H$-valued
and ${\mathscr{F}}_s$-adapted processes $f=\{f(s,\omega),\
(s,\omega)\in[0,T]\times\Omega\}$ satisfying
$$\|f\|_{M_{\mathscr{F}}^2(t,T;H)}^2\triangleq {\mathbb E\displaystyle\bigg[\int_t^T\|f(s)\|_H^2dt}\bigg]<\infty.$$

$\bullet$~~${M}^{\nu,2}( E; H):$ the space of all H-valued measurable
  functions $r=\{r(\theta), \theta \in E\}$ defined on the measure space $(E, \mathscr B(E); v)$ satisfying
$$\|r\|_{{ M}^{\nu,2}( E; H)}^2\triangleq{\displaystyle\int_E\|r(\theta)\|_H^2v(d\theta)}<~\infty.$$

 $\bullet$~~ ${M}_{\mathscr{F}}^{\nu,2}{([t,T]\times  E; H)}:$ the  space of all ${M}^{\nu,2}( E; H)$-valued
and ${\cal F}_s$-predictable processes $r=\{r(s,\omega,e),\
(s,\omega,e)\in[t,T]\times\Omega\times E\}$ satisfying
$$\|r\|_{M_{\mathscr F}^{\nu,2}([t,T]\times  E; H)}^2\triangleq {\mathbb E\bigg[\int_t^T\displaystyle\|r(s,\cdot)\|^2_
{{M}^{\nu,2}( Z; H)}dt}\bigg]<~\infty.$$
$\bullet$~~$L^2(\Omega,{\mathscr {F}},P;H):$ the space of all
$H$-valued random variables $\xi$ on $(\Omega,{\mathscr  {F}},P)$
satisfying
$$\|\xi\|_{L^2(\Omega,{\mathscr{F}},P;H)}\triangleq \mathbb E[\|\xi\|_H^2]<\infty.$$

\subsection{ Formulation of Problem}

Consider the following controlled linear MF-BSDE  driven by
 Brownian motion $\{W(s)\}_{t\leq s\leq T}$ and Poisson random  martingale  measure $\{\tilde{\mu}(d\theta,ds)\}_{t\leq s\leq T}$
\begin{equation}\label{eq:1.1}
\left\{\begin {array}{ll}
  dY(s)=&\bigg\{A(s)Y(s)+\bar A(s)\mathbb E [Y(s)]
  +B(s)u(s)+\bar B(s)\mathbb E [u(s)]+C(s)Z(s)+\bar C(s)\mathbb E [Z(s)]
  \\&+\displaystyle\int_{E} D(s,e)R(s,e)\nu (de)
  +\displaystyle\int_{E}\bar D(s,e) \mathbb E [R(s,e)]\nu (de)\bigg\}dt+ Z(s)dW(s)
  \\&+\displaystyle
  \int_{E} R(s,e)\tilde{\mu}(d\theta,
  ds), s\in [t, T],
   \\Y(T)=&\xi,
\end {array}
\right.
\end{equation}
with the following quadratic cost {functional}
\begin{eqnarray}\label{eq:1.2}
\begin{split}
 J(t,\xi; u(\cdot))=&\mathbb E\bigg\{\int_t^T\bigg(\langle Q(s)Y(s),
Y(s)\rangle+ \langle \bar {
Q}(s)\mathbb E[Y(s)], \mathbb E[Y(s)]\rangle
+\langle N_1(s)Z(s), Z(s)\rangle+\langle
\bar N_1(s)\mathbb E[Z(s)], \mathbb E[Z(s)]\rangle
\\&+ \int _{E}\langle N_2(s,e )R(s,e), R(s,e)\rangle\nu(de)+ \int _{E}\langle \bar N_2(s,e )\mathbb E[R(s,e)], \mathbb E[R(s,e)]\rangle\nu(de)
\\&
+\langle \bar N_3(s)u(s), u(s)\rangle+\langle \bar
{N}_3(s)\mathbb E[u(s)], \mathbb E[u(s)]\rangle
\bigg)ds\bigg]
\\&+\langle GY(t), Y(t)\rangle]
 +\langle \bar{ G}\mathbb E[Y(t)], \mathbb
E[Y(t)]\rangle \bigg\},
\end{split}
\end{eqnarray}
where $A(\cdot), \bar A(\cdot), B(\cdot), \bar B(\cdot), C(\cdot), \bar C(\cdot), D(\cdot, \cdot), \bar D(\cdot,\cdot)
, Q(\cdot), \bar Q(\cdot),
N_1(\cdot),\bar N_1(\cdot), N_2(\cdot,\cdot), \bar N_2(\cdot,\cdot),
 N_3(\cdot), \bar N_3(\cdot) $  are given
deterministic matrix-valued functions;
 $G$ and $\bar G$ are given matrices; $\xi$ is
 an $\mathscr F_T-$measurable random variable.

In the above, $u(\cdot)$ is our admissible control process. In this paper, a predictable stochastic process  $u(\cdot)$ is said to be an admissible control, if $ u(\cdot)\in
M_{\mathscr F}^2(t, T;\mathbb R^m)$. The set of all admissible controls is denoted by ${\cal A}[t,T]$. It is obviously that    ${\cal A}[t,T]$  is
a reflexive Banach space whose norm denoted by $||\cdot||_{{\cal A }[t,T]}$  is defined by
$$||u(\cdot)||_{{\cal A}[t,T]}= \bigg\{{\mathbb E\displaystyle\bigg[\int_0^T|u(t)
|^2dt}\bigg]\bigg\}^{\frac{1}{2}}, \forall u(\cdot)\in {\cal A}[t,T]. $$
For any admissible control $u(\cdot),$ the strong solution of the system { \eqref{eq:1.1}},  denoted by $(Y^{(t,\xi,u)}(\cdot),Z^{(t,\xi,u)}(\cdot), R^{(t,\xi, u)}(\cdot))$ or $(Y(\cdot), Z(\cdot), R(\cdot,\cdot))$
  if its dependence on
  admissible control $u(\cdot)$ is clear from  the context,   is called the
state process corresponding to the control process $u(\cdot)$, and
 $(u(\cdot); Y(\cdot), Z(\cdot), R(\cdot))$ is called an
admissible pair.

Our mean-field backward stochastic
linear quadratic (LQ) optimal  control problem
 with jump can be stated
as follows:
\begin{pro}\label{pro:1.1}
For any given $\xi\in L^2(\Omega,{\mathscr {F}},P;\mathbb R^n),$
find an admissible control ${u}^*(\cdot)\in {\cal A}[t, T]$ such that
\begin{equation}\label{eq:b7}
J(t,\xi;{u}^*(\cdot))=\displaystyle\inf_{u(\cdot)\in {{\cal A}[t, T]}}J(t, \xi, u(\cdot)).
\end{equation}
\end{pro}
Any  ${u}^*(\cdot)\in {{\cal A}[t, T]}$ satisfying  the above is called an
optimal control process of Problem \ref{pro:1.1} and the corresponding state process $( Y^*(\cdot), Z^*(\cdot), R^*(\cdot, \cdot))$ is
called the optimal state process. Correspondingly $(u^*(\cdot);
Y^*(\cdot), Z^*(\cdot), R^*(\cdot, \cdot))$ is called an optimal pair of
Problem \ref{pro:1.1}.

Throughout this paper, we make the following assumptions on the coefficients.

\begin{ass}\label{ass:1.1}
 The matrix-valued functions $A, \bar A, C, \bar C,
 Q, \bar Q, N_1,\bar N_1:[0, T]\rightarrow \mathbb R^{n\times n};
B,\bar B, :[0, T]\rightarrow \mathbb R^{n\times m}; D, \bar D:[0, T]\rightarrow {\cal L}^{v,2}(E; \mathbb R^{ n\times n}), N_2, \bar N_2:[0,T]\rightarrow  {\cal L} ^{v,2}(E; \mathbb R^{n\times n}); N_3, \bar N_3:[0, T]\rightarrow \mathbb R^{m\times m}$ are uniformly bounded measurable functions.
\end{ass}

\begin{ass}\label{ass:1.2}
 The matrix-valued functions $Q, Q+\bar Q, N_1, N_1+\bar N_1,N_2+\bar N_2, N_3, N_3+\bar N_3 $ are a.e. nonnegative
matrices, and $G, G+\bar G$ are nonnegative matrices.  Moreover, $N_3,N_3+\bar N_3 $  uniformly positive, i.e. for $\forall u\in \mathbb R^m$ and a.s. $s\in [t, T]$,
$ \langle N_3(s)u, u \rangle \geq \delta \langle u, u\rangle
$ and $ \langle (N_3(s)+\bar N_3(s))u, u \rangle \geq \delta \langle  u, u\rangle,
$  for some positive constant
$\delta.$
\end{ass}

The following result gives the well-posedness of the state equation as well as some useful estimates.

\begin{lem}\label{lem:1.1}
Let Assumption \ref{ass:1.1}  be satisfied. Then for any  $(\xi, u(\cdot))\in L^2(\Omega,{\mathscr {F}_T},P;\mathbb R^n)\times {\cal A} [t,T]$,
the state equation
\eqref{eq:1.1}
has a unique solution $ \Lambda(\cdot):=( Y(\cdot),
Z(\cdot),  R(\cdot, \cdot))\in M^2[t,T]:=S_{\mathscr F}^2 ( t, T; \mathbb R^n) \times  M_{\mathscr F}^2 ( t, T; \mathbb R^n) \times {M}_{\mathscr{F}}^{\nu,2}{([t,T]\times  E; \mathbb R^n)} .$
Moreover, we have the following estimate
\begin{eqnarray}\label{eq:1.4}
\begin{split}
  ||\Lambda(\cdot)||^2_{M^2[t, T]}\leq K 
  \bigg\{||u(\cdot)||_{{\cal A}[t, T]}^2+E\big[|\xi|^2\big]\bigg\}
  \end{split}
\end{eqnarray}
and
\begin{eqnarray}\label{eq:1.5}
\begin{split}
  |J(t,\xi;  u(\cdot))|< \infty.
  \end{split}
\end{eqnarray}
Suppose that $ \bar \Lambda(\cdot):=(\bar Y(\cdot),
\bar Z(\cdot), \bar R(\cdot,\cdot))$  is the solution to  the state equation \eqref{eq:1.1} corresponding to another  $(\bar\xi, \bar u(\cdot))\in L^2(\Omega,{\mathscr {F}}_T,P;\mathbb R^n)\times {\cal A} [t,T],$  then we have the
following estimate
\begin{eqnarray} \label{eq:1.6}
\begin{split}
   ||\Lambda(\cdot)-\bar\Lambda (\cdot)||^2_{M^2[t, T]}
\leq K \bigg \{
  ||u(\cdot)-\bar u(\cdot)||^2_{{\cal A}[t, T]}+\mathbb E\big[
  |\xi-\bar \xi|^2\big]\bigg\}.
  \end{split}
\end{eqnarray} 
Here we define
$$ ||\Lambda(\cdot)||^2_{M^2[t, T]}=:\mathbb E \bigg[\sup_{t\leq s\leq T}| Y(s)|^2\bigg]
  +\mathbb E \bigg[\int_t^ T| Z(s)|^2ds\bigg]
  +\mathbb E \bigg[\int_t^ T\int_E| R(s,e)|^2 \nu(de)ds\bigg].$$

\end{lem}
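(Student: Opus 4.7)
The plan is to establish existence and uniqueness of the solution to the mean-field BSDE with jumps via a contraction fixed-point argument after freezing the mean-field terms, and then to derive the three estimates by It\^o's formula combined with Gronwall and Burkholder-Davis-Gundy inequalities.

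First I would pick any $(\tilde Y,\tilde Z,\tilde R)\in M^2[t,T]$ and plug the deterministic trajectories $\mathbb E[\tilde Y(\cdot)]$, $\mathbb E[\tilde Z(\cdot)]$, $\mathbb E[\tilde R(\cdot,\cdot)]$ into the coefficients of \eqref{eq:1.1}. What remains is a standard linear BSDE driven by $W$ and $\tilde\mu$ with Lipschitz generator in $(Y,Z,R)$ (the Lipschitz constants supplied by the $L^\infty$ bounds in Assumption \ref{ass:1.1}) plus an $L^2$ inhomogeneous term that packages $B u,\bar B\mathbb E[u]$ and the frozen mean-field data. By the classical existence-uniqueness result for BSDEs with jumps (Tang-Li), this has a unique solution $(Y,Z,R)\in M^2[t,T]$, defining a map $\Phi:M^2[t,T]\to M^2[t,T]$.

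Next I would show $\Phi$ is a contraction in an equivalent norm. Writing the BSDE for the difference $(\Delta Y,\Delta Z,\Delta R)$ of two images and applying It\^o's formula to $e^{\beta s}|\Delta Y(s)|^2$, the drift terms involving $(\Delta Y,\Delta Z,\Delta R)$ themselves are absorbed by choosing $\beta$ large via Young's inequality, while the inhomogeneous part depends only on the expectations $\mathbb E[\tilde Y^1-\tilde Y^2]$ and its analogues, which are controlled by the $M^2$-norm of the input difference through Jensen. For $\beta$ sufficiently large, $\Phi$ is then a strict contraction in the weighted norm equivalent to $\|\cdot\|_{M^2[t,T]}$, yielding a unique fixed point which is the desired solution of \eqref{eq:1.1}. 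For the a priori estimate \eqref{eq:1.4}, I would apply It\^o's formula to $|Y(s)|^2$ on $[s,T]$, take expectations to kill the martingale increments of $\int Z\,dW$ and $\int_E R\,\tilde\mu$, and use the boundedness of the coefficients together with $|\mathbb E[\cdot]|^2\le\mathbb E|\cdot|^2$. Absorbing the $|Z|^2$ and $\int_E|R|^2\nu(de)$ contributions on the left via Young's inequality, Gronwall bounds $\sup_s\mathbb E|Y(s)|^2$ by $C(\mathbb E|\xi|^2+\|u\|^2_{{\cal A}[t,T]})$; Burkholder-Davis-Gundy then upgrades this to the $S^2$ bound on $Y$. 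The cost bound \eqref{eq:1.5} follows by substituting these estimates into \eqref{eq:1.2} and invoking Assumption \ref{ass:1.1}, and the stability estimate \eqref{eq:1.6} is obtained by the same It\^o-Gronwall-BDG chain applied to the BSDE satisfied by $(\Delta Y,\Delta Z,\Delta R)$ with terminal datum $\xi-\bar\xi$ and control perturbation $u-\bar u$.

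The main technical task, rather than any deep obstacle, is careful bookkeeping of the mean-field and jump contributions: every occurrence of $\mathbb E[\cdot]$ must be controlled by the corresponding $L^2$-norm via Jensen and then folded back by Gronwall, and each $R(s,e)$ term in It\^o's formula contributes an $\int_E|R|^2\nu(de)\,ds$ rather than a pointwise $|R|^2\,ds$ through the compensator. Both points are routine once the weighted-norm contraction has been set up, so the argument is fundamentally a standard linear-BSDE well-posedness proof adapted to the mean-field jump setting.
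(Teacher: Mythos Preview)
Your proposal is correct and follows essentially the same route as the paper's own proof: the paper also invokes a contraction-mapping argument for existence and uniqueness, obtains \eqref{eq:1.4} and \eqref{eq:1.6} by applying It\^o's formula to $|Y(\cdot)|^2$ and $|Y(\cdot)-\bar Y(\cdot)|^2$ together with Gronwall and BDG, and deduces \eqref{eq:1.5} from \eqref{eq:1.4} via the bound $\|\mathbb E[\Phi]\|_H^2\le\mathbb E\|\Phi\|_H^2$. Your use of a weighted norm $e^{\beta s}$ to force the contraction is one of the standard implementations the paper leaves implicit.
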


\begin{proof}
  The existence and uniqueness of the  solution can be  obtained by a standard argument using the contraction mapping theorem.
  For the estimates \eqref{eq:1.4} and \eqref{eq:1.6}, we can
  easily  obtain them  by  applying the It\^{o} formula to $|Y(\cdot)|^2$ and
   $|Y(\cdot)-\bar Y(\cdot)|^2 $, Gronwall inequality and B-D-G inequality.
   For the  estimate \eqref{eq:1.5}, using  Assumption
   \ref{ass:1.1} and  the  estimate  \eqref{eq:1.4},   we have

   \begin{eqnarray}\label{eq:1.7}
     \begin{split}
       |J(t, \xi; u(\cdot))|\leq  K \bigg\{
       ||\Lambda(\cdot)||_{M^2[t,T]}^2
       +||u(\cdot)||_{{\cal A}[t,T]}^2\bigg\}
  \leq  K \bigg\{||u(\cdot)||_{{\cal A}[t,T]}^2+ \mathbb E[|\xi|^2]\bigg\}
   < \infty,
     \end{split}
   \end{eqnarray}
   where we have used the elementary inequality:
   for any $\Phi \in {L^2(\Omega,{\mathscr {F}},P;H)},$
   \begin{eqnarray}
     \begin{split}
       ||\mathbb E [\Phi]||^2_H\leq  \mathbb E||\Phi||_H^2.
     \end{split}
   \end{eqnarray}
The proof is complete.
\end{proof}
Therefore, by Lemma \ref{lem:1.1}, we know that
Problem \ref{pro:1.1}
is well-defined.

\section{ Existence and Uniqueness of Optimal Control}

In this section, we study the existence and uniqueness of
the optimal control of Problem \ref{pro:1.1}.  To this end,  we first
establish
some elementary properties of the
cost functional.
\begin{lem}\label{lem:6.2}
  Let Assumptions  \ref{ass:1.1} and \ref{ass:1.2}
  be satisfied. Then  for any $(t,\xi)\in [0,T)\times L^2(\Omega,{\mathscr {F}}_T,P;\mathbb R^n),$ the cost functional $J(t,\xi;u(\cdot))$
is continuous over ${\cal A}[t,T].$
\end{lem}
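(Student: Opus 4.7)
The plan is to prove continuity by combining the stability estimate \eqref{eq:1.6} from Lemma \ref{lem:1.1} with standard estimates for quadratic forms. Continuity of $J(t,\xi;\cdot)$ on the Banach space ${\cal A}[t,T]$ is equivalent to showing that whenever $u_n(\cdot)\to u(\cdot)$ in ${\cal A}[t,T]$, one has $J(t,\xi;u_n(\cdot))\to J(t,\xi;u(\cdot))$.

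First, I would fix a convergent sequence $u_n(\cdot)\to u(\cdot)$ in ${\cal A}[t,T]$ and denote by $\Lambda_n(\cdot)=(Y_n,Z_n,R_n)$ and $\Lambda(\cdot)=(Y,Z,R)$ the corresponding state processes for the common terminal $\xi$. Applying the stability estimate \eqref{eq:1.6} (with $\bar\xi=\xi$) directly yields
\begin{equation*}
\|\Lambda_n(\cdot)-\Lambda(\cdot)\|_{M^2[t,T]}^2\leq K\|u_n(\cdot)-u(\cdot)\|_{{\cal A}[t,T]}^2\longrightarrow 0.
\end{equation*}
In particular $\|\Lambda_n(\cdot)\|_{M^2[t,T]}$ remains uniformly bounded by \eqref{eq:1.4}.

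Next, I would handle each quadratic term in \eqref{eq:1.2} separately via the polarization-type identity
\begin{equation*}
\langle M\alpha_n,\alpha_n\rangle-\langle M\alpha,\alpha\rangle=\langle M(\alpha_n-\alpha),\alpha_n\rangle+\langle M\alpha,\alpha_n-\alpha\rangle,
\end{equation*}
apply Cauchy--Schwarz, and use the uniform boundedness of the coefficients from Assumption \ref{ass:1.1}. For the terms involving expectations such as $\langle\bar Q\mathbb E[Y_n],\mathbb E[Y_n]\rangle$, I would use the elementary inequality $|\mathbb E[\Phi]|^2\leq\mathbb E|\Phi|^2$ to reduce to the pathwise $M^2$-norm bound above. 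For the jump terms, the relevant norm is the one on ${M}^{\nu,2}_{\mathscr F}([t,T]\times E;\mathbb R^n)$, and the boundedness of $N_2,\bar N_2, D,\bar D$ in the appropriate $v$-weighted sense (together with Fubini) allows the same estimate. Summing all contributions yields
\begin{equation*}
|J(t,\xi;u_n(\cdot))-J(t,\xi;u(\cdot))|\leq K\big(\|\Lambda_n\|_{M^2[t,T]}+\|\Lambda\|_{M^2[t,T]}+\|u_n\|_{{\cal A}[t,T]}+\|u\|_{{\cal A}[t,T]}\big)\big(\|\Lambda_n-\Lambda\|_{M^2[t,T]}+\|u_n-u\|_{{\cal A}[t,T]}\big),
\end{equation*}
which tends to zero by the stability estimate and the uniform bound.

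The argument is essentially routine once \eqref{eq:1.4} and \eqref{eq:1.6} are in hand; the only mild obstacle is bookkeeping, i.e.\ ensuring that each of the many quadratic terms (including the mean-field and jump pieces) is controlled by the correct component of the $M^2[t,T]$-norm and that the terminal terms $\langle GY(t),Y(t)\rangle$ and $\langle \bar G\mathbb E[Y(t)],\mathbb E[Y(t)]\rangle$ (evaluated at the initial instant of the backward equation) are handled via the $S^2_{\mathscr F}$-part of the norm. No new a priori estimate beyond Lemma \ref{lem:1.1} is required.
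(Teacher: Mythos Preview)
Your proposal is correct and follows essentially the same route as the paper: both arguments combine the a priori estimates \eqref{eq:1.4} and \eqref{eq:1.6} with a bilinear (polarization-type) bound on the quadratic terms to obtain an inequality of the form $|J(t,\xi;u)-J(t,\xi;\bar u)|\leq K(\|\Lambda-\bar\Lambda\|+\|u-\bar u\|)(\|\Lambda\|+\|\bar\Lambda\|+\|u\|+\|\bar u\|)$, from which continuity follows immediately. The only cosmetic difference is that the paper works with two arbitrary controls and writes the squared version of the estimate, whereas you phrase it via sequences.
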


\begin{proof}
 Suppose that $(u (\cdot);  \Lambda(\cdot))=(u (\cdot);  Y(\cdot),
Z(\cdot),  R(\cdot,\cdot))$ and $( \bar u (\cdot);  \bar \Lambda(\cdot))=(\bar u (\cdot); \bar Y(\cdot), \bar Z(\cdot), \bar R(\cdot,\cdot))$ be any two admissible control pairs.
 Under Assumptions \ref{ass:1.1} and \ref{ass:1.2}, from  the definition of the cost functional $J(t,\xi; u(\cdot))$ (see \eqref{eq:1.2}),
 we  have
\begin{eqnarray}\label{eq:5.10}
&&| J (t,\xi; u (\cdot)) - J (t,\xi; \bar u(\cdot) ) |^2
\\&\leq& K \bigg \{   ||\Lambda(\cdot)-\bar\Lambda (\cdot)||^2_{M^2[t, T]}+||u(\cdot)-\bar u(\cdot)||^2_{{\cal A}[t, T]}\bigg \}  \times \bigg\{ ||\Lambda(\cdot)||^2_{M^2[t, T]}+||u(\cdot)||^2_{{\cal A}[t, T]}+
||\bar\Lambda (\cdot)||^2_{M^2[t, T]}+||\bar u(\cdot)||^2_{{\cal A}[t, T]}\bigg \}
 \ . \nonumber
\end{eqnarray}
 Using the estimates \eqref{eq:1.4} and \eqref{eq:1.6} lead to
\begin{eqnarray}
| J (t,\xi; u (\cdot)) - J (t,\xi; \bar u (\cdot)) |^2
&\leq& K  \bigg \{   ||u(\cdot)-\bar u(\cdot)||^2_{{\cal A}[t, T]}\bigg \}  \times \bigg\{||u(\cdot)||^2_{{\cal A}[t, T]}+||\bar u(\cdot)||^2_{{\cal A}[t, T]}+\mathbb E[|\xi|^2]\bigg \} \ .
\end{eqnarray}
Thus, we get that
\begin{eqnarray}
J (t,\xi; u (\cdot)) - J (t,\xi; \bar u (\cdot)) \rightarrow 0 \ , \quad  as \quad u (\cdot) \rightarrow   \bar  u  (\cdot)
\quad in \quad {{\cal A}[t, T]} \ .
\end{eqnarray}
The proof is complete.

\end{proof}

 \begin{lem}\label{lem:6.3}
   Let
Assumptions \ref{ass:1.1} and \ref{ass:1.2}
  be satisfied. Then  for any given $(t,\xi)\in [0,T)\ \times L^2(\Omega,{\mathscr {F}}_T,P;\mathbb R^n),$
   the cost functional $J(t, \xi; u(\cdot))$ is
strictly convex   ${\cal A}[0,T].$ Moreover, the cost
functional $J(t,
\xi; u(\cdot))$ is coercive over ${\cal A}[0,T],$
i.e.,
$$\displaystyle\lim_ {\|u(\cdot)\|_{{\cal A}[t,T]}{\rightarrow
\infty}}J(t,\xi; u(\cdot))=\infty.$$
\end{lem}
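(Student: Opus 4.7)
The overall strategy is to exploit the linearity of the BSDE \eqref{eq:1.1} in $(Y,Z,R,u)$ together with fixed terminal data $\xi$: for any admissible $u$, the state decomposes as $\Lambda(u,\xi)=\Lambda(u,0)+\Lambda(0,\xi)$, where $\Lambda(u,0)$ depends linearly on $u$ and $\Lambda(0,\xi)$ is independent of $u$. Consequently $J(t,\xi;\cdot)$ is a genuine quadratic functional of $u$ on $\mathcal{A}[t,T]$, and both claims reduce to checking positivity of its leading quadratic part.

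For strict convexity, given $u_1,u_2\in\mathcal{A}[t,T]$ with $u_1\neq u_2$ and $\lambda\in(0,1)$, I would introduce $v:=u_1-u_2$ and let $(\tilde Y,\tilde Z,\tilde R):=\Lambda(v,0)$, which is the solution to \eqref{eq:1.1} with control $v$ and zero terminal. By linearity, the solution corresponding to $\lambda u_1+(1-\lambda)u_2$ is $\lambda\Lambda(u_1,\xi)+(1-\lambda)\Lambda(u_2,\xi)$, so a direct expansion of the quadratic cost yields
\[
\lambda J(t,\xi;u_1)+(1-\lambda)J(t,\xi;u_2)-J(t,\xi;\lambda u_1+(1-\lambda)u_2)=\lambda(1-\lambda)\,\tilde J,
\]
where $\tilde J$ is the value of the same quadratic form applied to $(v;\tilde Y,\tilde Z,\tilde R)$. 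To show $\tilde J>0$ I would use, for each pair $(M,\bar M)\in\{(Q,\bar Q),(N_1,\bar N_1),(N_3,\bar N_3),(G,\bar G)\}$ and analogously for $(N_2,\bar N_2)$ integrated over $E$, the identity
\[
\mathbb E[\langle M\Psi,\Psi\rangle]+\langle \bar M\,\mathbb E[\Psi],\mathbb E[\Psi]\rangle=\mathbb E[\langle M(\Psi-\mathbb E[\Psi]),\Psi-\mathbb E[\Psi]\rangle]+\langle(M+\bar M)\mathbb E[\Psi],\mathbb E[\Psi]\rangle,
\]
which under Assumption \ref{ass:1.2} is nonnegative. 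The only term that is actually strictly positive when $v\not\equiv 0$ is the one generated by $N_3$ and $\bar N_3$; combining the uniform positivity estimate $\langle N_3 v,v\rangle\geq\delta|v|^2$ with the same identity (which keeps the $\mathbb E[v]$-part controlled by $N_3+\bar N_3\geq\delta I$) yields $\tilde J\geq\delta\,\|v\|^2_{\mathcal{A}[t,T]}>0$, and hence strict convexity.

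For coercivity, I would decompose $J$ according to the affine structure of the state: writing $\Lambda(u,\xi)=\Lambda(u,0)+\Lambda(0,\xi)$, the cost splits as
\[
J(t,\xi;u)=J_0(u)+2L_\xi(u)+C_\xi,
\]
where $J_0(u)$ is the purely quadratic part (involving $\Lambda(u,0)$ and $u$), $L_\xi(u)$ is linear in $u$ through the cross terms between $\Lambda(u,0)$ and $\Lambda(0,\xi)$, and $C_\xi$ is a constant depending only on $\xi$. The same decomposition identity used above, combined with $N_3\geq\delta I$, gives $J_0(u)\geq\delta\,\|u\|^2_{\mathcal{A}[t,T]}$. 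For the cross term, applying Cauchy--Schwarz together with the estimate \eqref{eq:1.4} of Lemma \ref{lem:1.1} to both $\Lambda(u,0)$ and $\Lambda(0,\xi)$ yields $|L_\xi(u)|\leq K(1+\mathbb E[|\xi|^2])^{1/2}\|u\|_{\mathcal{A}[t,T]}$, while $|C_\xi|\leq K\,\mathbb E[|\xi|^2]$. Putting these together,
\[
J(t,\xi;u)\geq\delta\,\|u\|^2_{\mathcal{A}[t,T]}-K(1+\mathbb E[|\xi|^2])^{1/2}\|u\|_{\mathcal{A}[t,T]}-K\,\mathbb E[|\xi|^2],
\]
and the right-hand side tends to $+\infty$ as $\|u\|_{\mathcal{A}[t,T]}\to\infty$.

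The only real bookkeeping hurdle is the correct application of the mean/centred decomposition to every quadratic term simultaneously, in particular to the integrals over $E$ driven by $N_2,\bar N_2$ and to the terminal terms involving $G,\bar G$; once this is done uniformly, both the strict positivity needed for convexity and the coercivity bound follow from isolating the $N_3$-part, which is the only piece Assumption \ref{ass:1.2} makes strictly positive.
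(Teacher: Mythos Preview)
Your argument is correct, and for strict convexity it is essentially the paper's: both rely on the mean/centred identity
\[
\mathbb E[\langle M\Psi,\Psi\rangle]+\langle \bar M\,\mathbb E[\Psi],\mathbb E[\Psi]\rangle
=\mathbb E[\langle M(\Psi-\mathbb E[\Psi]),\Psi-\mathbb E[\Psi]\rangle]+\langle(M+\bar M)\mathbb E[\Psi],\mathbb E[\Psi]\rangle
\]
applied to every block, with strict positivity coming solely from the $N_3$-term. You spell out the convexity computation via $\lambda u_1+(1-\lambda)u_2$ more explicitly than the paper does, but the content is the same.

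For coercivity the paper takes a noticeably shorter route. Once $J(t,\xi;u)$ is rewritten via the centred identity (your identity, applied to \emph{every} term, including the $Y$-, $Z$-, $R$- and terminal blocks), all state-dependent summands are already nonnegative by Assumption~\ref{ass:1.2}. One may therefore simply discard them and retain only the control block, obtaining directly
\[
J(t,\xi;u)\ \geq\ \delta\,\|u\|^2_{\mathcal{A}[t,T]}.
\]
No affine splitting $J=J_0+2L_\xi+C_\xi$, no cross-term estimate via Lemma~\ref{lem:1.1}, and no dependence on $\mathbb E[|\xi|^2]$ appear. Your detour through the decomposition $\Lambda(u,\xi)=\Lambda(u,0)+\Lambda(0,\xi)$ is valid but unnecessary here: the $\xi$-contribution is already absorbed into the nonnegative state blocks of the centred form, so it never has to be isolated and bounded separately. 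Your approach would, however, be the right one in a setting where some of the state weights $Q$, $N_1$, $N_2$, $G$ were allowed to be indefinite, since then the state terms could not simply be dropped.
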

\begin{proof}
Since   the weighting matrices in the cost
functional is not random, it is  easy  to  check that
  \begin{eqnarray}\label{eq:2.4}
\begin{split}
 J(t,\xi; u(\cdot))=& \displaystyle \mathbb E\bigg[\int_t^T\bigg(\langle Q(s)(Y(s)-\mathbb E[Y(s)]),
Y(s)-\mathbb E[Y(s)])\rangle + \langle  (Q+\bar {
Q})(s)\mathbb E[Y(s)], \mathbb E[Y(s)]\rangle
 \\&+\langle N_1(s)(Z(s)-\mathbb E[Z(s)]), Z(s)-\mathbb E [Z(s)]\rangle
 +\langle  (N_1(s)+\bar
{N}_1(s))\mathbb E[Z(s)], \mathbb E[Z(s)]\rangle
 \\&+
 \int_{E}\langle N_2(s,e )(R(s,e)-\mathbb E[R(s,e)]), R(s,e)-\mathbb E [R(s,e)]\rangle
 \nu(de)
 \\&+ \int_{E}\langle  (N_2(s,e)+\bar
{N}_2(s,e))\mathbb E[R(s,e)], \mathbb E[R(s,e)]\rangle \nu(de)
 \\&+\langle N_3(s)(u(s)-\mathbb E[u(s)]), u(s)-\mathbb E [u(s)]\rangle
 +\langle  (N_3(s)+\bar
{N}_3(s))\mathbb E[u(s)], \mathbb E[u(s)]\rangle\bigg)ds\bigg]
\\&+\mathbb E\bigg[\langle
G(Y(t)-\mathbb E [Y(t)], Y(t)-\mathbb E [Y(t)]\rangle +\langle  (G+\bar{ G})\mathbb E[Y(t)], \mathbb
E[Y(t)]\rangle \bigg].
\end{split}
\end{eqnarray}
Thus  the cost functional $J(t, \xi; u(\cdot))$ over
${\cal A}[t,T]$ is convex from the nonnegativity of the
 $N_1, N_1+\bar N_1, N_2, N_2+N_2, N_3+N_3, Q, Q+\bar Q, G, G+\bar G $. Actually, since
 $N_3, N_3+\bar N_3$ is uniformly positive, $J(t, \xi; u(\cdot))$ is strictly
convex. On the other hand, by Assumption \ref{ass:1.2}
and \eqref{eq:2.4} , we get
\begin{eqnarray}\label{eq:2.5}
  \begin{split}
    J(t,\xi; u(\cdot)) \geq& \mathbb E\bigg[\int_t^T
    \bigg (\langle N_3(s)(u(s)-\mathbb E[u(s)]), u(s)-\mathbb E[u(s)]\rangle
 +\langle  (N_3(s)+
{\bar N}_3(s))\mathbb E[u(s)], \mathbb E[u(s)]\rangle \bigg )ds\bigg]
\\ \geq &\delta \mathbb E\bigg[\int_t^T\langle u(s)-\mathbb E[u(s)], u(s)-\mathbb E[u(s)]\rangle ds\bigg]+
 \delta\mathbb E\bigg[\int_t^T \langle \mathbb E[u(s)], \mathbb E[u(s)]\rangle ds\bigg]
 \\=& \delta\mathbb E \bigg[\int_t^T |u(s)|^2ds\bigg]
\\=&\delta ||u(\cdot)||^2_{{\cal A}[t,T]}.
  \end{split}
\end{eqnarray}
Thus $\displaystyle\lim_ {\|u(\cdot)\|_{{\cal A}[t, T]}{\rightarrow \infty}}J(t,\xi; u(\cdot))=\infty.$ The proof is complete.
\end{proof}

 \begin{lem}\label{lem:2.3}
Let
Assumptions \ref{ass:1.1} and \ref{ass:1.2}
be satisfied.   Then  for any given $(t,\xi)\in [0,T)\ \times L^2(\Omega,{\mathscr {F}}_T,P;\mathbb R^n),$  the cost functional  $J(t,\xi; u(\cdot))$ is
Fr\`{e}chet differentiable over ${\cal A}[t,T]$ and
 the corresponding  Fr\`{e}chet
derivative $J'(t,\xi; u(\cdot))$ is  given by
\begin{eqnarray}\label{eq:2.6}
\begin{split}
 \langle J'(t,\xi; u(\cdot)),  v(\cdot) \rangle=&2\mathbb
E\bigg[\int_t^T\bigg(\langle Q(s)Y^{(t,\xi,u)}(s),  Y^{(t,0,v)}(s)\rangle
 +\langle \bar Q(s)\mathbb E [Y^{(x,u)}(s)],  \mathbb E[Y^{(t,0,v)}(s)]\rangle
 \\&+\langle N_1(s)Z^{(t,\xi,u)}(s),  Z^{(t,0,v)}(s)\rangle
 +\langle \bar N_1(s)\mathbb E [Z^{(t,\xi,u)}(s)],  \mathbb E[Z^{(t,0,v)}(s)]\rangle
  \\&+\int_{E}\langle N_2(s,e)R^{(t,\xi,u)}(s,e),  R^{(t,0,v)}(s,e)\rangle \nu(de)
\\&+\int_{E}\langle \bar N_2(s)\mathbb E [R^{(t,\xi,u)}(s,e)],  \mathbb E[R^{(t,0,\xi)}(s)]\rangle \nu(de)
\\&+\langle N_3(s)u(s), v(s)\rangle+\langle \bar N_3(s)\mathbb E[u(s)], \mathbb E [v(s)]\rangle \bigg)ds\bigg]
\\&+2\mathbb E\bigg[\langle G Y^{(t,\xi,u)}(t),  Y^{(t,0,v)}(t)\rangle+\langle
\bar G\mathbb E[Y^{(t,\xi,u)}(t)],  \mathbb E[Y^{(t,0,v)}(t)]\rangle\bigg] , ~~~~\forall
u(\cdot), v(\cdot)\in{\cal A}[t, T],
\end{split}
\end{eqnarray}
where $(Y^{(t,0,v)}(\cdot), Z^{(t,0,v)}(\cdot),R^{(t,0,v)}(\cdot, \cdot)) $ is the solution of the following
MF-BSDE

\begin{equation}\label{eq:2.7}
\left\{\begin {array}{ll}
  dY(s)=&\bigg\{A(s)Y(s)+\bar A(s)\mathbb E [Y(s)]
  +B(s)v(s)+\bar B(s)\mathbb E [v(s)]+C(s)Z(s)+\bar C(s)\mathbb E [Z(s)]
  \\&+\displaystyle\int_{E} D(s,e)R(s,e)\nu (de)
  +\displaystyle\int_{E}\bar D(s,e) \mathbb E [R(s,e)]\nu (de)\bigg\}ds+ Z(s)dW(s)
  \\&+\displaystyle
  \int_{E} R(s,e)\tilde{\mu}(d\theta,
  ds), s\in [t, T],
   \\Y(T)=& 0.
\end {array}
\right.
\end{equation}

\end{lem}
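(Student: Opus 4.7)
The plan is to exploit two structural facts: (a) the state equation \eqref{eq:1.1} is linear in the pair $(\xi, u(\cdot))$, so the solution map is affine; (b) the cost functional \eqref{eq:1.2} is a quadratic form in the triple $(\Lambda(\cdot), u(\cdot))$. Consequently $u(\cdot) \mapsto J(t,\xi;u(\cdot))$ is a continuous quadratic polynomial on the Banach space $\mathcal{A}[t,T]$, and its Fr\'echet derivative must be the bilinear ``cross term'' obtained by expanding $J(t,\xi; u+v)$. The target formula \eqref{eq:2.6} is then nothing but this cross term read off explicitly.

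Concretely, fix $u(\cdot), v(\cdot) \in \mathcal{A}[t,T]$. By the linearity of \eqref{eq:1.1} together with the uniqueness part of Lemma \ref{lem:1.1}, one has
\begin{equation*}
\bigl(Y^{(t,\xi,u+v)}, Z^{(t,\xi,u+v)}, R^{(t,\xi,u+v)}\bigr) = \bigl(Y^{(t,\xi,u)} + Y^{(t,0,v)},\, Z^{(t,\xi,u)} + Z^{(t,0,v)},\, R^{(t,\xi,u)} + R^{(t,0,v)}\bigr),
\end{equation*}
since the sum on the right satisfies \eqref{eq:1.1} with control $u+v$ and terminal value $\xi$. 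Substituting this decomposition into \eqref{eq:1.2} and using the algebraic identity $\langle M(x+y), x+y\rangle = \langle Mx,x\rangle + 2\langle Mx,y\rangle + \langle My,y\rangle$ in each quadratic term (taking $M$ to equal its symmetric part without loss of generality), together with the linearity of the expectation operator on the mean-field terms, one arrives at
\begin{equation*}
J(t,\xi; u+v) = J(t,\xi; u) + 2L(u,v) + J(t,0; v),
\end{equation*}
where $L(u,v)$ is precisely one half of the right-hand side of \eqref{eq:2.6}.

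It then remains to verify that $v(\cdot)\mapsto L(u,v)$ is a bounded linear functional on $\mathcal{A}[t,T]$ and that the remainder $J(t,0;v)$ is $O(\|v(\cdot)\|_{\mathcal{A}[t,T]}^2)$. Both claims follow immediately from the a priori bound in Lemma \ref{lem:1.1} applied to $(Y^{(t,0,v)}, Z^{(t,0,v)}, R^{(t,0,v)})$ (which yields $\|(Y^{(t,0,v)}, Z^{(t,0,v)}, R^{(t,0,v)})\|_{M^2[t,T]}^2 \leq K\|v\|_{\mathcal{A}[t,T]}^2$) combined with the uniform boundedness of the weighting matrices from Assumption \ref{ass:1.1} and the Cauchy-Schwarz inequality; explicitly $|L(u,v)| \leq C_{u,\xi}\|v\|_{\mathcal{A}[t,T]}$ and $|J(t,0;v)| \leq K\|v\|_{\mathcal{A}[t,T]}^2$. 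Therefore
\begin{equation*}
\frac{|J(t,\xi; u+v) - J(t,\xi; u) - 2L(u,v)|}{\|v\|_{\mathcal{A}[t,T]}} = \frac{|J(t,0;v)|}{\|v\|_{\mathcal{A}[t,T]}} \longrightarrow 0 \quad \text{as} \quad \|v\|_{\mathcal{A}[t,T]} \to 0,
\end{equation*}
establishing Fr\'echet differentiability with $\langle J'(t,\xi;u), v\rangle = 2L(u,v)$, which is exactly \eqref{eq:2.6}. The only real challenge of the proof is bookkeeping: carefully matching each of the numerous cross terms generated by the mean-field structure — that is, ensuring the plain state cross terms and the $\mathbb{E}[\cdot]$ cross terms are each gathered with the correct coefficient. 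No delicate analytic argument is required beyond the linear-quadratic estimates already provided by Lemma \ref{lem:1.1}.
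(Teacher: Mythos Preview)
Your proof is correct and follows essentially the same approach as the paper: both exploit the linearity of the state equation to decompose the perturbed state as $\Lambda^{(t,\xi,u+v)}=\Lambda^{(t,\xi,u)}+\Lambda^{(t,0,v)}$, expand the quadratic cost to obtain $J(t,\xi;u+v)-J(t,\xi;u)=J(t,0;v)+\Delta^{u,v}$, and then use the estimate $|J(t,0;v)|\leq K\|v\|_{\mathcal{A}[t,T]}^2$ from Lemma~\ref{lem:1.1} to conclude Fr\'echet differentiability. If anything, you are slightly more explicit in noting that the candidate derivative $v\mapsto 2L(u,v)$ is a bounded linear functional, a point the paper leaves implicit.
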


\begin{proof}

Let $u(\cdot)$ and $v(\cdot)$  be
two any given admissible control. For simplicity, the right hand side of \eqref {eq:2.6}
is denoted by  $\Delta^{u,v}.$
Since the state equation \eqref{eq:1.1} is linear, by the uniqueness of the solution of
the MF-BSDE,
it is easily to check that
\begin{eqnarray}\label{eq:2.8}
\begin{split}
  Y^{(t, \xi, u+v)}(s)=&Y^{(s,
  \xi, u)}(s)+Y^{(t,0, v)}(s),
  \\Z^{(t, \xi, u+v)}(s)=&Z^{(t,
  \xi, u)}(s)+Z^{(t,0, v)}(s), 
  \\R^{(t, \xi, u+v)}(s)=&R^{(t,
  \xi, u)}(s)+R^{(t,0, v)}(s), t\leq s\leq T.
  \end{split}
\end{eqnarray}
 Therefore, in terms of  \eqref{eq:2.8}
 and  the definition  of the
cost functional $J(x, u(\cdot))$ (see \eqref{eq:1.2}), it is easy to check
that
\begin{eqnarray} \label{eq:2.7}
  \begin{split}
 J(t,\xi; u(\cdot) +v(\cdot) )-J(t,\xi; u(\cdot) )= J(t,0;v(\cdot))+\Delta^{u,v}.
  \end{split}
\end{eqnarray}
 On the other hand,  the
  estimate \eqref{eq:1.7} leads to
\begin{eqnarray}
  \begin{split}
    |J(t,0;v(\cdot))| \leq & K||v(\cdot) ||^2_{{\cal A}[t, T]}.
  \end{split}
\end{eqnarray}
Therefore,
\begin{eqnarray}
  \begin{split}
  \displaystyle\lim_ {\|v(\cdot) \|_{{\cal A}[t, T]} {\rightarrow
0}}\frac{|J(t,\xi;  u(\cdot) +v(\cdot) )
-J(t,\xi; u(\cdot) )-\Delta^{u,v}|}{||v(\cdot) ||_{{\cal A}[t, T]}}=\displaystyle\lim_ {\|v(\cdot) \|_{{\cal A}[t, T]} {\rightarrow
0}}\frac{|J(t,0; v(\cdot)|}{||v(\cdot) ||_{{\cal A}[t, T]}}
=0,
  \end{split}
\end{eqnarray}
which gives  that $J(t, \xi; u(\cdot))$ has Fr\'{e}chet derivative  $\Delta^{u,v}.$
The proof is complete.
\end{proof}

\begin{rmk}
  Since the cost functional $J(t,\xi; u(\cdot))$
  is Fr\'{e}chet differentiable,  then
  it is also G\^{a}teaux  differentiable.
  Moreover,the G\^{a}teaux  derivative
  is the  Fr\'{e}chet derivative
  $\langle J'(t, \xi; u(\cdot)),  v(\cdot) \rangle.$
  In fact,  from \eqref{eq:2.7}, we have
  \begin{eqnarray} \label{eq:2.10}
  \begin{split}
  &\displaystyle\lim_ {\eps {\rightarrow
0}}\frac{J(t,\xi; u(\cdot) +\eps v(\cdot) )
-J(t,\xi;  u(\cdot) )}{\eps}
\\=&
\displaystyle\lim_ {\eps {\rightarrow
0}}\frac{J(t,0; \eps v(\cdot))+\Delta^{u,\eps v}}{\eps}
\\=&
\displaystyle\lim_ {\eps {\rightarrow
0}}\frac{\eps^2J(t,0; v(\cdot))+\eps\Delta^{u, v}}{\eps}
\\=&\Delta^{u, v}
\\=&
\langle J'(t,\xi; u(\cdot)),  v(\cdot) \rangle.
  \end{split}
\end{eqnarray}
\end{rmk}

Now  by  Lemma \ref{lem:6.2}-\ref{lem:2.3}, we can obtain the existence and uniqueness of optimal control.  This
result   is stated  as follows.
\begin{thm}\label{them:b1}
Let Assumptions \ref{ass:1.1} and \ref{ass:1.2}
be satisfied. Then Problem \ref{pro:1.1} has a unique
optimal control. \end{thm}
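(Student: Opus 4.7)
The plan is to invoke the standard existence-and-uniqueness result from convex optimization on reflexive Banach spaces: a proper, continuous, strictly convex, and coercive functional on a reflexive Banach space attains its infimum at a unique point. All the required hypotheses have already been verified in the preceding lemmas, so the proof is essentially a matter of assembling them.

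First I would recall that $\mathcal{A}[t,T] = M_{\mathscr{F}}^2(t,T;\mathbb{R}^m)$ is a Hilbert space, hence reflexive, as noted in the setup. Then I would collect the three ingredients: continuity of $J(t,\xi;\cdot)$ over $\mathcal{A}[t,T]$ from Lemma \ref{lem:6.2}; strict convexity and coercivity from Lemma \ref{lem:6.3}; and (if needed for a cleaner argument) Fréchet differentiability from Lemma \ref{lem:2.3}. Since $J(t,\xi;\cdot)$ is convex and continuous on a Banach space, it is automatically weakly lower semicontinuous.

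For existence, I would take a minimizing sequence $\{u_k(\cdot)\}\subset\mathcal{A}[t,T]$ with $J(t,\xi;u_k(\cdot))\downarrow \inf_{u\in\mathcal{A}[t,T]}J(t,\xi;u(\cdot))$. Coercivity forces $\{u_k(\cdot)\}$ to be norm-bounded in $\mathcal{A}[t,T]$, for otherwise $J(t,\xi;u_k(\cdot))\to\infty$, contradicting the definition of infimum (which is finite since $J(t,\xi;0)<\infty$ by Lemma \ref{lem:1.1}). By reflexivity of $\mathcal{A}[t,T]$, a subsequence $\{u_{k_j}(\cdot)\}$ converges weakly to some $u^*(\cdot)\in\mathcal{A}[t,T]$. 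Weak lower semicontinuity of $J(t,\xi;\cdot)$ then yields
\begin{eqnarray*}
J(t,\xi;u^*(\cdot))\leq \liminf_{j\to\infty}J(t,\xi;u_{k_j}(\cdot)) = \inf_{u(\cdot)\in\mathcal{A}[t,T]}J(t,\xi;u(\cdot)),
\end{eqnarray*}
so $u^*(\cdot)$ is optimal.

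For uniqueness, I would argue by contradiction. If $u^*(\cdot)$ and $\tilde u^*(\cdot)$ were two distinct optimal controls with common minimum value $m$, then strict convexity from Lemma \ref{lem:6.3} would give
\begin{eqnarray*}
J\!\left(t,\xi;\tfrac{1}{2}u^*(\cdot)+\tfrac{1}{2}\tilde u^*(\cdot)\right) < \tfrac{1}{2}J(t,\xi;u^*(\cdot)) + \tfrac{1}{2}J(t,\xi;\tilde u^*(\cdot)) = m,
\end{eqnarray*}
contradicting the minimality of $m$. The main (only mild) obstacle is making sure the abstract ingredients are cited correctly — in particular, deriving weak lower semicontinuity from the convexity plus continuity established in Lemmas \ref{lem:6.2}--\ref{lem:6.3}, rather than attempting to verify it by hand through the mean-field terms, which would be cumbersome. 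Everything else is a direct invocation of the preceding lemmas.
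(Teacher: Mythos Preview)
Your proposal is correct and follows essentially the same approach as the paper: both rely on the reflexivity of ${\cal A}[t,T]$ together with the continuity, strict convexity, and coercivity established in Lemmas \ref{lem:6.2}--\ref{lem:6.3}. The only difference is cosmetic: the paper invokes Proposition 2.12 of Ekeland--T\'emam [7] as a black box, whereas you write out the minimizing-sequence/weak-lower-semicontinuity argument explicitly.
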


\begin{proof}

Since the admissible controls set ${\cal A}[t,T]=
 M^2_{\mathscr F}(t,T;\mathbb R^m)$ is a reflexive Banach space, in terms of  Lemma \ref{lem:6.2}-\ref{lem:2.3}, the uniqueness and existence of the optimal control of Problem \ref{pro:1.1}
 can be directly got from Proposition 2.12 of [7]
(i.e., the coercive, strictly convex and lower-semi continuous functional defined
on the reflexive Banach space has a unique
minimum value point). 
 The proof is complete.
\end{proof}

\begin{thm}\label{thm:2.5}
 Let Assumptions \ref{ass:1.1} and \ref{ass:1.2}
be satisfied. Then  a necessary and sufficient condition for an admissible control
 $u(\cdot)\in {\cal A}[t, T]$
 to be an optimal control  of Problem \ref{pro:1.1}  is that
  for any admissible control $v(\cdot) \in {\cal A}[t, T],$
  \begin{equation}\label{eq:b16}
    \langle  J'(t, \xi; u(\cdot) ), v(\cdot) \rangle = 0,
  \end{equation}
  i.e.
  \begin{eqnarray}\label{eq:2.13}
\begin{split}
 0=&2\mathbb
E\bigg[\int_t^T\bigg(\langle Q(s)Y^{(t,\xi,u)}(s),  Y^{(t,0,v)}(s)\rangle
 +\langle \bar Q(s)\mathbb E [Y^{(x,u)}(s)],  \mathbb E[Y^{(t,0,v)}(s)]\rangle
 \\&+\langle N_1(s)Z^{(t,\xi,u)}(s),  Z^{(t,0,v)}(s)\rangle
 +\langle \bar N_1(s)\mathbb E [Z^{(t,\xi,u)}(s)],  \mathbb E[Z^{(t,0,v)}(s)]\rangle
  \\&+\int_{E}\langle N_2(s,e)R^{(t,\xi,u)}(s,e),  R^{(t,0,v)}(s,e)\rangle \nu(de)
\\&+\int_{E}\langle \bar N_2(s)\mathbb E [R^{(t,\xi,u)}(s,e)],  \mathbb E[R^{(t,0,\xi)}(s)]\rangle \nu(de)
\\&+\langle N_3(s)u(s), v(s)\rangle+\langle \bar N_3(s)\mathbb E[u(s)], \mathbb E [v(s)]\rangle \bigg)ds\bigg]
\\&+2\mathbb E\bigg[\langle G Y^{(t,\xi,u)}(t),  Y^{(t,0,v)}(t)\rangle+\langle
\bar G\mathbb E[Y^{(t,\xi,u)}(t)],  \mathbb E[Y^{(t,0,v)}(t)]\rangle\bigg] , ~~~~\forall
u(\cdot), v(\cdot)\in{\cal A}[t, T].
\end{split}
\end{eqnarray}
\end{thm}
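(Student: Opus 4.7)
The plan is to verify the two directions of the equivalence by exploiting the convex structure and the Fr\'echet differentiability already established. The admissible set $\mathcal{A}[t,T]$ is a linear space, which allows variations in any direction, and the Fr\'echet derivative has been explicitly computed in Lemma \ref{lem:2.3}. The key algebraic identity to keep in mind is the decomposition
\begin{equation*}
J(t,\xi; u(\cdot)+v(\cdot)) - J(t,\xi; u(\cdot)) = J(t,0; v(\cdot)) + \langle J'(t,\xi; u(\cdot)), v(\cdot)\rangle,
\end{equation*}
which was derived in the proof of Lemma \ref{lem:2.3}.

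For the necessity, I would argue by the classical variational method. Fix an arbitrary $v(\cdot)\in \mathcal{A}[t,T]$ and $\eps\in\mathbb{R}$. Since $\mathcal{A}[t,T]$ is a linear space, $u(\cdot)+\eps v(\cdot)\in \mathcal{A}[t,T]$. The optimality of $u(\cdot)$ yields $J(t,\xi; u(\cdot)+\eps v(\cdot)) \geq J(t,\xi; u(\cdot))$, and Fr\'echet differentiability (via the G\^ateaux expansion in the remark after Lemma \ref{lem:2.3}) gives
\begin{equation*}
J(t,\xi; u(\cdot)+\eps v(\cdot)) - J(t,\xi; u(\cdot)) = \eps \langle J'(t,\xi; u(\cdot)), v(\cdot)\rangle + o(\eps).
\end{equation*}
Dividing by $\eps>0$ and letting $\eps\downarrow 0$ gives $\langle J'(t,\xi; u(\cdot)), v(\cdot)\rangle \geq 0$; replacing $v(\cdot)$ by $-v(\cdot)$ gives the reverse inequality, so \eqref{eq:b16} holds.

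For the sufficiency, I would substitute the assumed identity $\langle J'(t,\xi; u(\cdot)), v(\cdot)\rangle = 0$ into the decomposition displayed above. This yields
\begin{equation*}
J(t,\xi; u(\cdot)+v(\cdot)) - J(t,\xi; u(\cdot)) = J(t,0; v(\cdot)) \qquad \text{for every } v(\cdot)\in \mathcal{A}[t,T].
\end{equation*}
By the coercivity estimate \eqref{eq:2.5} applied with terminal value $\xi = 0$, we have $J(t,0; v(\cdot)) \geq \delta \|v(\cdot)\|^2_{\mathcal{A}[t,T]} \geq 0$. Hence, for any $w(\cdot)\in \mathcal{A}[t,T]$, setting $v(\cdot) = w(\cdot)-u(\cdot)$ shows $J(t,\xi; w(\cdot)) \geq J(t,\xi; u(\cdot))$, so $u(\cdot)$ is optimal.

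The argument is essentially textbook convex optimization and should contain no serious obstacle; the only points that need a bit of care are (i) confirming that $u(\cdot)+\eps v(\cdot)$ lies in $\mathcal{A}[t,T]$ for all real $\eps$ so that two-sided variations are admissible, and (ii) invoking the coercivity estimate \eqref{eq:2.5} with zero terminal data, which is legitimate because the inequality was derived without any assumption on $\xi$ other than square integrability.
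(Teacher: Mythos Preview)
Your proof is correct. The necessity argument is essentially identical to the paper's: both perturb by $\eps v(\cdot)$, pass to the limit $\eps\downarrow 0$, and then replace $v(\cdot)$ by $-v(\cdot)$ to obtain the reverse inequality.

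The sufficiency arguments differ slightly. The paper invokes the abstract first-order convexity inequality
\[
J(t,\xi; v(\cdot)) - J(t,\xi; u(\cdot)) \;\geq\; \langle J'(t,\xi; u(\cdot)),\, v(\cdot)-u(\cdot)\rangle,
\]
which vanishes because the derivative is zero in every direction. You instead substitute the hypothesis directly into the exact quadratic decomposition \eqref{eq:2.7} and conclude via $J(t,0;v(\cdot))\geq 0$ from the coercivity bound \eqref{eq:2.5}. Your route is a bit more concrete, exploiting the explicit quadratic structure rather than general convexity; the paper's route is shorter once convexity has been recorded in Lemma~\ref{lem:6.3}. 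Both are standard and entirely valid here.
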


\begin{proof}
  For the necessary part,  suppose that $u(\cdot)$
  is  an optimal control.
   Then  from \eqref{eq:2.10},  for any admissible control $v(\cdot)$ and $0< \eps \leq 1,$
   we have
  \begin{eqnarray}
  \begin{split}
 \langle J'(t,\xi; u(\cdot)),  v(\cdot) \rangle= \displaystyle\lim_ {\eps {\rightarrow
0^{+}}}\frac{J(t,\xi; u(\cdot) +\eps v(\cdot) )-J(t,\xi; u(\cdot) )}{\eps}
\geq 0,
  \end{split}
\end{eqnarray}
and
\begin{eqnarray}
  \begin{split}
  -\langle J'(t,\xi; u(\cdot)),  v(\cdot)\rangle=\langle J'(t,\xi; u(\cdot)),  -v(\cdot) \rangle= \displaystyle\lim_ {\eps {\rightarrow
0^{+}}}\frac{J(t,\xi; u(\cdot) +\eps (-v(\cdot)) )-J(t,\xi; u(\cdot) )}{\eps}
\geq 0,
  \end{split}
\end{eqnarray}
which imply  that
 \begin{equation}\label{eq:b16}
    \langle  J'(t, \xi; u(\cdot) ), v(\cdot) \rangle = 0.
  \end{equation}
For the sufficient part, let $u(\cdot)$ be an given
admissible control, and suppose that
for any admissible control $v(\cdot),$
$ \langle  J'(t,\xi; u(\cdot) ), v(\cdot) \rangle = 0.$
Since the cost functional $J$
is convex, then we have
\begin{equation}\label{eq:b16}
 J(t,\xi; v(\cdot))-J(t,\xi; u(\cdot))  \geq  \langle  J'(t,\xi; u(\cdot) ), v(\cdot) \rangle = 0,
  \end{equation}
which
implies that
$u(\cdot)$ is an optimal
control. The proof is complete.
\end{proof}

\section { Stochastic Hamilton Systems, decoupling, and Riccati equations, Representations of optimal controls }
\subsection{Stochastic Hamilton Systems}

In this subsection, we give the characterization of optimal control of Problem \ref{pro:1.1} by 
the stochastic Hamilton system. To simplify our notation, in what follows, we shall often suppress the time variable s if no confusion
can arise.
\begin{thm}\label{thm:b2}
Let  Assumptions \ref{ass:1.1} and
\ref{ass:1.2}  be satisfied.
 Then, a necessary and
sufficient condition for an admissible pair $(u(\cdot); Y(\cdot), Z(\cdot), R(\cdot,\cdot))$ to be an optimal pair of  Problem \ref{pro:1.1} is that  the admissible
control $u(\cdot)$ satisfies
\begin{eqnarray} \label{eq:3.1000}
  \begin{split}
 &N_3(s)u(s)+\bar N_3(s)\mathbb E[u(s)]+B^\top(s)k({s-})+\bar B^\top(s) \mathbb E [k({s-})]=0, \quad a.e. a.s., s\in [t, T],
     \end{split}
\end{eqnarray}
where $k(\cdot) $ is the unique
solution of the following  MF-SDE
\begin {equation}\label{eq:3.2}
\left\{\begin{array}{lll}
dk(s)&=&-\bigg[A^\top(s)k(s)+\bar  A(s)^\top\mathbb E [k(s)]+Q(s)Y(s)+\bar Q(s)\mathbb E[Y(s)]\bigg]ds
\\&&-\bigg[C^\top(s)k(s)+\bar  C(s)^\top\mathbb E [k(s)]+2N_1(s)Z(s)+\bar N_1(s)\mathbb E[Z(s)]\bigg]dW(s)
\\&&-\displaystyle\int_ E \bigg[D^\top(s,e)k(s)+\bar  D(s,e)^\top\mathbb E [k(s)]+N_2(s,e)R(s,e)+\bar N_2(s,e)\mathbb E[R(s,e)]\bigg]\tilde{\mu}(d\theta, ds), s\in [t, T],
 \\k(t)&=&-GY(t)-\bar G\mathbb E[Y(t)].
\end{array}
  \right.
  \end {equation}

\end{thm}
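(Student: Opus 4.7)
The plan is to invoke Theorem \ref{thm:2.5}, which reduces optimality of $u(\cdot)$ to the variational equality $\langle J'(t,\xi;u(\cdot)),v(\cdot)\rangle=0$ for every $v(\cdot)\in\mathcal{A}[t,T]$, and to rewrite this bilinear form by duality with the adjoint process $k(\cdot)$ defined in \eqref{eq:3.2}. The target identity is
\[
\tfrac{1}{2}\langle J'(t,\xi;u(\cdot)),v(\cdot)\rangle=\mathbb{E}\!\int_t^T\!\bigl\langle N_3(s)u(s)+\bar N_3(s)\mathbb{E}[u(s)]+B^\top(s)k(s-)+\bar B^\top(s)\mathbb{E}[k(s-)],v(s)\bigr\rangle\,ds,
\]
from which \eqref{eq:3.1000} is extracted by letting $v(\cdot)$ range over $\mathcal{A}[t,T]$.

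First I would note that \eqref{eq:3.2} is a linear mean-field SDE with jumps whose inhomogeneous coefficients depend on $(Y,Z,R)$, which lie in the correct $L^2$ spaces by Lemma \ref{lem:1.1}; a standard contraction argument then yields a unique solution $k(\cdot)\in S^2_{\mathscr{F}}(t,T;\mathbb{R}^n)$. Next, for a fixed $v(\cdot)\in\mathcal{A}[t,T]$, set $(\tilde Y,\tilde Z,\tilde R):=(Y^{(t,0,v)},Z^{(t,0,v)},R^{(t,0,v)})$, the variational state corresponding to source $Bv+\bar B\mathbb{E}[v]$ and zero terminal condition. I would apply the It\^o product formula (with jumps) to $\langle k(s),\tilde Y(s)\rangle$ on $[t,T]$, insert the boundary data $k(t)=-GY(t)-\bar G\mathbb{E}[Y(t)]$ and $\tilde Y(T)=0$, and take expectations to eliminate the martingale parts.

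The heart of the argument is a systematic cancellation of cross terms. The $A$-terms in $dk$ and $d\tilde Y$ annihilate via $\langle A^\top k,\tilde Y\rangle=\langle k,A\tilde Y\rangle$; the $C$-terms cancel analogously within the Brownian co-variation, and the $D$-terms within the Poisson co-variation $\mathbb{E}\!\int_t^T\!\int_E\langle\cdot,\cdot\rangle\nu(de)\,ds$. The mean-field counterparts cancel through $\mathbb{E}\langle\bar A^\top\mathbb{E}[k],\tilde Y\rangle=\langle\bar A^\top\mathbb{E}[k],\mathbb{E}[\tilde Y]\rangle=\mathbb{E}\langle k,\bar A\mathbb{E}[\tilde Y]\rangle$, and analogously for $\bar C,\bar D$. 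What survives are precisely the $Q,\bar Q$, $N_1,\bar N_1$, $N_2,\bar N_2$ and $G,\bar G$ contributions, which reproduce, term by term, the expression for $\langle J',v\rangle$ supplied by Lemma \ref{lem:2.3}, together with the drift $Bv+\bar B\mathbb{E}[v]$ of $\tilde Y$, which yields $\mathbb{E}\!\int_t^T\!\langle B^\top k+\bar B^\top\mathbb{E}[k],v\rangle\,ds$. Rearranging delivers the target identity.

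The main obstacle will be the careful bookkeeping of the It\^o formula in the presence of jumps. One must compute the compensated Poisson co-variation correctly so that it generates the $D$- and $N_2$-cross terms needed for the cancellation, and the left-limit $k(s-)$ in \eqref{eq:3.1000} appears precisely because the Poisson integrand has to be predictable. One must also check that the factors multiplying $N_1,\bar N_1$ (respectively $N_2,\bar N_2$) in \eqref{eq:3.2} are exactly those needed for the $Z$- and $R$-cross terms on the two sides of the identity to match in both sign and magnitude. Once the identity is in place, necessity follows by testing against $v=X\mathbf{1}_{\{|X|\leq n\}}$ with $X:=N_3u+\bar N_3\mathbb{E}[u]+B^\top k+\bar B^\top\mathbb{E}[k]$ and sending $n\to\infty$, while sufficiency is immediate from Theorem \ref{thm:2.5}.
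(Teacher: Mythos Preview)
Your proposal is correct and follows essentially the same route as the paper: invoke Theorem~\ref{thm:2.5}, apply the It\^o product formula with jumps to $\langle k(s),Y^{(t,0,v)}(s)\rangle$, take expectations to obtain the duality identity, and then conclude necessity and sufficiency from the arbitrariness of $v(\cdot)$. Your explicit discussion of the term-by-term cancellations and of the predictability reason for $k(s-)$ is more detailed than the paper's presentation, but the underlying argument is the same.
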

\begin{proof}
 Let $u(\cdot)\in {\cal A}[t, T]$
 be a given  admissible  control.
 Then for any admissible control $v(\cdot) \in {{\cal A}[t, T]},$ from Lemma \ref{lem:2.3}, we have

\begin{eqnarray}\label{eq:3.3}
\begin{split}
 &\langle J'(t,\xi; u(\cdot)),  v(\cdot) \rangle
 \\=&2\mathbb
E\bigg[\int_t^T\bigg(\langle QY^{(t,\xi,u)},  Y^{(t,0,v)}\rangle
 +\langle \bar Q\mathbb E [Y^{(x,u)}],  \mathbb E[Y^{(t,0,v)}]\rangle
 +\langle N_1Z^{(t,\xi,u)},  Z^{(t,0,v)}\rangle
 \\&+\langle \bar N_1\mathbb E [Z^{(t,\xi,u)}],  \mathbb E[Z^{(t,0,v)}]\rangle
  +\int_{E}\langle N_2R^{(t,\xi,u)},  R^{(t,0,v)}\rangle \nu(de)
\\&+\int_{E}\langle \bar N_2\mathbb E [R^{(t,\xi,u)}],  \mathbb E[R^{(t,0,\xi)}]\rangle \nu(de)
+\langle N_3u, v\rangle+\langle \bar N_3\mathbb E[u], \mathbb E [v]\rangle \bigg)ds\bigg]
\\&+2\mathbb E\bigg[\langle G Y^{(t,\xi,u)}(t),  Y^{(t,0,v)}(t)\rangle+\langle
\bar G\mathbb E[Y^{(t,\xi,u)}(t)],  \mathbb E[Y^{(t,0,v)}(t)]\rangle\bigg].
\end{split}
\end{eqnarray}
On the other hand, by [23], we know that
\eqref{eq:3.2}
 admits a unique adapted solution $k(\cdot).$ Applying It\^{o}s formula to
$\langle Y^{t,0; u}(s), k(s)\rangle$
 and taking expectation, we
have
\begin{eqnarray}\label{eq:3.4}
\begin{split}
&\mathbb
E\bigg[\int_t^T\bigg(\langle QY^{(t,\xi,u)},  Y^{(t,0,v)}\rangle
 +\langle \bar Q\mathbb E [Y^{(x,u)}],  \mathbb E[Y^{(t,0,v)}]\rangle
 +\langle N_1Z^{(t,\xi,u)},  Z^{(t,0,v)}\rangle
+\langle \bar N_1\mathbb E [Z^{(t,\xi,u)}],  \mathbb E[Z^{(t,0,v)}]\rangle
  \\&+\int_{E}\langle N_2R^{(t,\xi,u)},  R^{(t,0,v)}\rangle \nu(de)+\int_{E}\langle \bar N_2\mathbb E [R^{(t,\xi,u)}],  \mathbb E[R^{(t,0,\xi)}]\rangle \nu(de) \bigg)ds\bigg]
  \\&+\mathbb E\bigg[\langle G Y^{(t,\xi,u)}(t),  Y^{(t,0,v)}(t)\rangle+\langle
\bar G\mathbb E[Y^{(t,\xi,u)}(t)],  \mathbb E[Y^{(t,0,v)}(t)]\rangle\bigg]
\\ =&\mathbb
E\bigg [\int_t^T
\bigg(\langle k, Bv+\bar B\mathbb E [v]\rangle\bigg)ds\bigg ]
\\=& \mathbb
E\bigg[\int_t^T \bigg \langle B^\top k+\bar B^\top \mathbb E [k], v\bigg\rangle ds\bigg].
\end{split}
\end{eqnarray}
Putting \eqref{eq:3.4} into \eqref{eq:3.3}, we get
\begin{eqnarray}\label{eq:4.5}
\begin{split}
 &\mathbb
\langle J'(t,\xi;u(\cdot)),  v(\cdot) \rangle=
2\mathbb E\bigg[\int_t^T \bigg \langle N_3(s)u(s)+\bar N_3(s)\mathbb E[u(s)] +B^\top(s)k({s-})+\bar B^\top(s) \mathbb E [k({s-})], v(s)\bigg\rangle ds\bigg].
\end{split}
\end{eqnarray}
For the necessary,  let $(u(\cdot); Y(\cdot), Z(\cdot), R(\cdot,\cdot))$ be an optimal pair,  then from Theorem  \ref{thm:2.5},  we have $ \langle J'( t, \xi; u(\cdot)),  v(\cdot) \rangle=0$
which imply that
\begin{eqnarray} \label{eq:3.60}
  \begin{split}
 &N_3(s)u(s)+\bar N_3(s)\mathbb E[u(s)]+B^\top(s)k({s-})+\bar B^\top(s) \mathbb E [k({s-})]=0, \quad a.e. a.s., s\in [t, T],
     \end{split}
\end{eqnarray} from \eqref{eq:4.5},  since
$v(\cdot)$ is arbitrary.

For the sufficient part, let $(u(\cdot); Y(\cdot), Z(\cdot), R(\cdot,\cdot))$ be an admissible pair satisfying \eqref{eq:3.1000}.
Putting \eqref{eq:3.1000} into \eqref{eq:4.5},
then we have  $ \langle J'(t,\xi; u(\cdot)),  v(\cdot) \rangle=0,$
which implies that $(u(\cdot); Y(\cdot), Z(\cdot), R(\cdot, \cdot))$ is an optimal control
pair from Theorem \ref{thm:2.5}.
\end{proof}
\vspace{1mm}

Finally, we introduce the so-called stochastic Hamilton system which
consists of the state equation \eqref{eq:1.1}, the adjoint equation
\eqref{eq:3.2}  and the dual
representation \eqref{eq:3.1000}:

\begin{numcases}{}\label{eq:3.7}
  dY(s)=\bigg\{A(s)Y(s)+\bar A(s)\mathbb E [Y(s)]
  +B(s)u(s)+\bar B(s)\mathbb E [u(s)]+C(s)Z(s)+\bar C(s)\mathbb E [Z(s)]\nonumber
  \\\quad\quad\quad\quad-\displaystyle\int_{E} D(s,e)R(s,e)\nu (de)
  +\displaystyle\int_{E}\bar D(s,e) \mathbb E [R(s,e)]\nu (de)\bigg\}ds+ Z(s)dW(s)+\displaystyle
  \int_{E} R(t,e)\tilde{\mu}(d\theta,
  ds), \nonumber\\
dk(s)=-\bigg[A(s)^\top k(s)+\bar  A(s)^\top\mathbb E [k(s)]+Q(s)Y(s)+\bar Q(s)\mathbb E[Y(s)]\bigg]ds\nonumber
\\\quad\quad\quad\quad
-\bigg[C(s)^\top k(s)+\bar  C(s)^\top\mathbb E [k(s)]+N_1(s)Z(s)+\bar N_1(s)\mathbb E[Z(s)]\bigg]dW(s)\nonumber
\\\quad\quad\quad\quad-\displaystyle\int_ E \bigg[D(s,e)^\top k(s)+\bar  D(s,e)^\top\mathbb E [k(s)]+N_2(s,e)R(s,e)+\bar N_2(s,e)\mathbb E[R(s,e)]\bigg]\tilde{\mu}(d\theta, ds),
 \\ Y(T)=\xi, k(t)=-GY(t)-\bar G\mathbb E[Y(t)], \nonumber
 \\ N_3(s)u(s)+\bar N_3(s)\mathbb E[u(s)]+B^\top(s)k({t-})+\bar B^\top(s) \mathbb E [k({t-})]=0, \quad s\in [t, T]. \nonumber
\end{numcases}
This is a fully coupled mean-field forward- backward stochastic differential equation
 (MF-FBSDE in short) with jump and its solution consists of $( u(\cdot), Y(\cdot), Z(\cdot),R(\cdot, \cdot), K(\cdot))$.

\begin{thm} \label{thm:6.7}
Let Assumptions \ref{ass:1.1} and \ref{ass:1.2}
be satisfied. Then
stochastic Hamilton system \eqref{eq:3.7} has a unique solution
$(u(\cdot), Y(\cdot),Z(\cdot),
 R(\cdot, \cdot), k(\cdot))\in M_{\mathscr{F}}^2(t,
T;\mathbb R^m)\times S_{\mathscr{F}}^2(t,
T;\mathbb R^n)\times M_{\mathscr{F}}^2(t,
T;\mathbb R^n)\times
{M}_{\mathscr{F}}^{\nu,2}{([0,T]\times  Z;
\mathbb R^n)} \times S_{\mathscr{F}}^2(t, T;\mathbb R^n).$  Moreover $u(\cdot)$ is the unique optimal
control of Problem \ref{pro:1.1} and $(Y(\cdot),Z(\cdot),
 R(\cdot, \cdot))$ is its
corresponding optimal state process.
\end{thm}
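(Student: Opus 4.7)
The plan is to synthesize the results already proved—namely the existence and uniqueness of an optimal control (Theorem \ref{them:b1}), the variational characterization of optimality (Theorem \ref{thm:2.5}), and the Hamiltonian-system characterization (Theorem \ref{thm:b2})—rather than attack the fully coupled MF-FBSDE \eqref{eq:3.7} directly by a fresh fixed-point argument. The key point is that optimality of the control component \emph{decouples} the system at the level of well-posedness: the state triple is obtained from a purely backward MF-SDE once $u$ is known, and then the adjoint $k$ is obtained from a purely forward MF-SDE once the state is known.

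\textbf{Existence.} I would first invoke Theorem \ref{them:b1} to produce the unique optimal control $u^{*}(\cdot)\in\mathcal{A}[t,T]$, then Lemma \ref{lem:1.1} to produce the corresponding state triple $(Y^{*},Z^{*},R^{*})\in M^{2}[t,T]$. With this data in hand, the adjoint equation \eqref{eq:3.2} becomes a \emph{forward} linear mean-field SDE with jumps on $[t,T]$, driven by the now-known processes $(Y^{*},Z^{*},R^{*})$ with initial condition $-GY^{*}(t)-\bar G\,\mathbb{E}[Y^{*}(t)]\in L^{2}(\Omega,\mathscr{F}_{T},P;\mathbb{R}^{n})$; by the standard well-posedness theory for mean-field forward SDEs with jumps, it admits a unique solution $k^{*}\in S_{\mathscr{F}}^{2}(t,T;\mathbb{R}^{n})$. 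Theorem \ref{thm:b2} applied to the optimal pair then delivers the dual relation \eqref{eq:3.1000}, so the quintuple $(u^{*},Y^{*},Z^{*},R^{*},k^{*})$ solves \eqref{eq:3.7} in the advertised spaces.

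\textbf{Uniqueness and optimality.} Conversely, let $(u,Y,Z,R,k)$ be any solution of \eqref{eq:3.7}. The first three lines force $(Y,Z,R)$ to be the state associated with the admissible control $u$, and $k$ to be the solution of \eqref{eq:3.2} associated with $(u;Y,Z,R)$; the last line is precisely \eqref{eq:3.1000}. By the sufficiency direction of Theorem \ref{thm:b2}, $(u;Y,Z,R)$ is an optimal pair. The uniqueness part of Theorem \ref{them:b1} then yields $u=u^{*}$, Lemma \ref{lem:1.1} yields $(Y,Z,R)=(Y^{*},Z^{*},R^{*})$, and well-posedness of \eqref{eq:3.2} yields $k=k^{*}$. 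This simultaneously establishes uniqueness for \eqref{eq:3.7} and the final assertion that its $u$-component is the unique optimal control with $(Y,Z,R)$ as optimal state.

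The only technical step requiring care is the well-posedness of the adjoint MF-SDE \eqref{eq:3.2}, including an $S_{\mathscr{F}}^{2}$-estimate of $k$ in terms of $(Y,Z,R)$ and the initial datum $-GY(t)-\bar G\,\mathbb{E}[Y(t)]$ under Assumption \ref{ass:1.1}; this follows from a contraction argument combined with the B-D-G and Gronwall inequalities (and is the result the authors attribute to [23]). Once this is accepted, the proof is essentially bookkeeping over the decoupled forward/backward well-posedness and the already-proved optimality dichotomy, with no new analytic obstacle.
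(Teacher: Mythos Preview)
Your proposal is correct and follows essentially the same route as the paper: existence via Theorem~\ref{them:b1} plus the necessary part of Theorem~\ref{thm:b2} (with well-posedness of the forward adjoint MF-SDE \eqref{eq:3.2}), and uniqueness via the sufficient part of Theorem~\ref{thm:b2} together with uniqueness of the optimal control and of the decoupled forward/backward equations. The only minor slip is that the initial datum $-GY^{*}(t)-\bar G\,\mathbb{E}[Y^{*}(t)]$ should be recorded as $\mathscr{F}_t$-measurable (not merely in $L^{2}(\Omega,\mathscr{F}_{T},P;\mathbb{R}^{n})$) so that the forward SDE on $[t,T]$ is well posed, but this holds automatically since $Y^{*}$ is adapted.
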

\begin{proof}  By Theorem \ref{them:b1},
  Problem \ref{pro:1.1} admits a unique optimal
pair $   (u(\cdot), Y(\cdot),Z(\cdot),
 R(\cdot, \cdot), k(\cdot)).$  Suppose   $k(\cdot)$
is the unique
solution of the adjoint equation \eqref{eq:3.2} corresponding to  the optimal pair
$(u(\cdot), Y(\cdot),Z(\cdot),
 R(\cdot, \cdot))$.
Then by  the necessary part of Theorem \ref{thm:b2}, the optimal control has the dual presentation \eqref{eq:3.1000}. Consequently,
 $(u(\cdot), Y(\cdot),Z(\cdot),
 R(\cdot, \cdot), k(\cdot))$ consists of an adapted
solution to the  stochastic Hamilton system \eqref{eq:3.7}.
Next, if the stochastic Hamilton system \eqref{eq:3.7} has an
another adapted solution $(\bar u(\cdot),\bar Y(\cdot), \bar Z(\cdot),\bar R(\cdot,\cdot), \bar k(\cdot)),$  then $(\bar u(\cdot),\bar Y(\cdot), \bar Z(\cdot),\bar R(\cdot,\cdot))$
must be an optimal pair of Problem \ref{pro:1.1} by the sufficient
part of Theorem \ref{thm:b2}. So we must have $  u(\cdot)= \bar
u(\cdot)$ by the  uniqueness of  the optimal control. Furthermore,
by the uniqueness of the solutions  of MF-SDE and  MF-BSDE, one
must have $(\bar u(\cdot),\bar Y(\cdot), \bar Z(\cdot),\bar R(\cdot,\cdot), \bar k(\cdot))=(u(\cdot),Y(\cdot), Z(\cdot), R(\cdot,\cdot),  k(\cdot))$ .
The proof is complete.
\end{proof}

In summary, the stochastic Hamilton system \eqref{eq:3.7}
completely characterizes the optimal control of Problem \ref{pro:1.1}. Therefore, solving Problem \ref{pro:1.1} is equivalent to solving
the stochastic Hamilton system, moreover, the unique optimal control
can be given by \eqref{eq:3.1000}.
Taking expectation to \eqref{eq:3.1000},  we have

\begin{eqnarray} \label{eq:3.8}
  \begin{split}
N_3(t)u(t)+\bar N_3(t)\mathbb E[u(t)]+B^\top(t)k({t-})+\bar B^\top(t) \mathbb E [k({t-})]=0, \quad a.e. a.s.,
     \end{split}
\end{eqnarray}
which implies that

\begin{eqnarray} \label{eq:3.9}
  \begin{split}
 &\mathbb E[u(t)]
 =-(N_3(t)+\bar N_3(t))^{-1}\bigg[(B (t)+\bar B (t))^\top \mathbb E [k({t-})]\bigg], \quad a.e. a.s.
     \end{split}
\end{eqnarray}
From\eqref{eq:3.1000}, we know that
\begin{eqnarray} \label{eq:3.10}
  \begin{split}
 &N_3(s)u(s)=-\bar N_3(s)\mathbb E[u(s)]-B^\top(s)k({s-})-\bar B^\top(s) \mathbb E [k({s-})], \quad a.e. a.s..
     \end{split}
\end{eqnarray}
Then putting \eqref{eq:3.9} into \eqref{eq:3.10},
we  have
\begin{eqnarray} \label{eq:3.6}
  \begin{split}
 u(s)=&-N_3^{-1}(s)\bigg\{B^\top (s)k({s-})+\bar B^\top (s) \mathbb E [k({s-})]\bigg]
\\&+\bar N_3(s)(N_3(s)+\bar N_3(s))^{-1}\bigg[(B (s)+\bar B (s))^\top \mathbb E [k({s-})]\bigg\}
, \quad a.e. a.s., \quad \in [t, T].
     \end{split}
\end{eqnarray}

\subsection{   Derivation of   Riccati equations }

Although the optimal control of
 Problem \ref{pro:1.1} is completely characterized by the  stochastic Hamilton system \eqref{eq:3.7},
 \eqref{eq:3.7} is a fully coupled  mean-field forward-backward stochastic differential equation whose solvability is much difficult to be obtained.
To solve  the  stochastic Hamilton system \eqref{eq:3.7}, as in [11], we will use the decoupling technique for general FBSDEs which will lead to a derivation
of two Riccati equations

Let $(u(\cdot), Y(\cdot),Z(\cdot),
 R(\cdot, \cdot), k(\cdot))$ be the solution of the stochastic  Hamilton system \eqref{eq:3.7}.Taking expectation on both sides of  the stochastic  Hamilton system \eqref{eq:3.7},
we get that $(\mathbb E[u(\cdot)], \mathbb E[Y(\cdot)],\mathbb E[Z(\cdot)],
 \mathbb E[R(\cdot, \cdot)], \mathbb E[k(\cdot)])$ satisfies  the following forward-backward
ordinary differential equation 
\begin{equation}\label{eq:4.1}
\left\{\begin {array}{lll}
dE[k(s)]&=&-\bigg[(A^\top(s)+\bar  A(s)^\top)\mathbb E [k(s)]+(Q(s)+\bar Q(s))\mathbb E[Y(s)]\bigg],\\
   d\mathbb E[Y(s)]&=&\bigg\{(A(s)+\bar A(s))\mathbb E [Y(s)]
  +(B(s)+\bar B(s))\mathbb E [u(s)]+(C(s)+\bar C(s))\mathbb E [Z(s)]
  \\&&+\displaystyle\int_{E} (D(t,e)
  +\bar D(s,e)) \mathbb E [R(s,e)]\nu (de)\bigg\}ds,
   \\\mathbb E[Y(T)]&=& \mathbb E[\xi],
   \mathbb E[k(t)]=-(G+\bar G)\mathbb E[Y(t)],\\
  0&=&(N_3(s)+\bar N_3(s))\mathbb E[u(s)]+(B^\top(s)+\bar B^\top(s)) \mathbb E [k({s-})].
\end {array}
\right.
\end{equation}
Further,  it is easy to check that  $(u(\cdot)-\mathbb E [u(\cdot)], Y(\cdot)-\mathbb E [Y(\cdot)],Z(\cdot)-\mathbb E [Z(\cdot)], R(\cdot,\cdot)-\mathbb E [R(\cdot,\cdot)],k(\cdot)-\mathbb E [k(\cdot)])$ satisfies  the following forward-backward
stochastic differential equation
\begin{equation}\label{eq:3.13}
\left\{\begin {array}{lll}
  dY-\mathbb E[Y]&=&\bigg\{A(Y-\mathbb E [Y])
  +B(u-\mathbb E [u])+C(Z-\mathbb E [Z])
  +\displaystyle\int_{E} D(R-\mathbb E [R])\nu (de)\bigg\}ds+ ZdW+\displaystyle
  \int_{E} R\tilde{\mu}(d\theta,
  ds),
   \\
dk-\mathbb E[k]&=&-\bigg[A^\top(k-\mathbb E [k])+Q(Y-\mathbb E[Y])\bigg]ds+
\bigg[C^\top(k-\mathbb E [k])-
(C^\top+\bar  C^\top)\mathbb E [k]+N_1(Z-\mathbb E[Z])\\&&+(N_1+\bar N_1)\mathbb E[Z]\bigg]dW
-\displaystyle\int_ E \bigg[D^\top(k-\mathbb E [k])+
(D^\top+\bar  D^\top)\mathbb E [k]
+N_2(R-
\mathbb E[R])\\&&+(N_2+\bar N_2)\mathbb E[R]\bigg]\tilde{\mu}(d\theta, ds),
 \\k(t)-\mathbb E[k(t)]&=&-G(Y(t)-\mathbb E[Y(t)]),Y(T)-\mathbb E[Y(T)]=\xi-\mathbb E[\xi],
 \\ &0=&N_3(u-\mathbb E[u])+B^\top(k({t-})- \mathbb E [k({t-})]).
 \end {array}
\right.
\end{equation}
Now we assume that the state process $Y(\cdot)$ and the adjoint process
  $k(\cdot)$  have the following
  relationship:
\begin{eqnarray}
  \begin{split}
    Y(s)=P(s)(k(s)-\mathbb E[k(s)])
    +\Pi(s)\mathbb  E[k(s)]+\varphi(s),
  \end{split}
\end{eqnarray}
where $P(\cdot), \Pi(\cdot):[0,T]\longrightarrow \mathbb R^{n\times n}$
are absolutely continuous and  $\varphi(\cdot)$
satisfies the following BSDE with jump
\begin{eqnarray}
  \begin{split}
    d\varphi(s)=\alpha(s)ds+\beta(s)dW(s)
    +\Phi(t,e)d\tilde \mu(de,ds),
    \varphi(T)=\xi,
  \end{split}
\end{eqnarray}
for some $\mathbb F-$progressively measurable 
process $\alpha$, $\beta$ and $\Phi$.
Consequently,  we further  have the following
relationship
\begin{eqnarray} \label{eq:4.50}
  \begin{split}
    \mathbb E [Y(s)]=\Pi(s)\mathbb  E[k(s)]
    +\mathbb E[\varphi(s)]
  \end{split}
\end{eqnarray}
 and

 \begin{eqnarray} \label{eq:4.6}
  \begin{split}
   Y(s)-\mathbb E [Y(s)]=P(s)(k(s)-\mathbb E[k(s)])+(\varphi(s)-\mathbb E[\varphi(s)]).
  \end{split}
\end{eqnarray}
Denote
\begin{eqnarray}
  d\eta(s)=\gamma ds+\beta(s)dW(s)
  +\Phi(s,e)d\tilde \mu(de,ds),
    \eta(T)=\xi-\mathbb E[\xi],
\end{eqnarray}
where $$\eta(s)= \varphi (s)-
\mathbb E[\varphi(s)], \gamma(s)= \alpha(s)-
\mathbb E[\alpha(s)].$$
In the following,  we
begin to formerly derive the
corresponding Riccati equations
which $P(\cdot)$ and $\Pi(\cdot)$
should satisfy. From the relationships
\eqref{eq:4.6} and \eqref{eq:3.13},
applying It\^{o} formula
 to $P(s)(Y(s)-\mathbb E[Y(s)])$ leads to
\begin{eqnarray} \label{eq:4.7}
  \begin{split}
&\bigg\{A(Y-\mathbb E [Y])
  +B(u-\mathbb E [u])+C(Z-\mathbb E [Z])
  +\displaystyle\int_{E} D(R-\mathbb E [R])\nu (de)\bigg\}ds+ ZdW+\displaystyle
  \int_{E} R\tilde{\mu}(d\theta,
  ds)
\\=&d\big(Y-\mathbb E[Y])
\\=&dP(k-\mathbb E[k])+d\eta
\\=& \bigg[\dot{P}(k-\mathbb E[k])-PA^\top(k-\mathbb E [k])-PQ(Y-\mathbb E[Y])\bigg]ds-P\bigg[C^\top(k-\mathbb E [k])+
(C^\top+\bar  C^\top)\mathbb E [k]\\&+N_1(Z-\mathbb E[Z])+(N_1+\bar N_1)\mathbb E[Z]\bigg]dW
-\displaystyle\int_ E  P \bigg[D^\top(k-\mathbb E [k])+
(D^\top+\bar  D^\top)\mathbb E [k]
\\&+N_2(R-
\mathbb E[R])+(N_2+\bar N_2)\mathbb E[R]\bigg]\tilde{\mu}(d\theta, ds)+\gamma ds+\beta dW
  +\Phi d\tilde \mu(de,ds).
  \end{split}
\end{eqnarray}
Comparing the diffusion terms of
both sides of the above equality, we have
\begin{eqnarray} \label{eq:4.8}
  \begin{split}
    Z =-P\bigg[C^\top k +\bar  C ^\top\mathbb E [k ]+N_1 Z +\bar N_1 \mathbb E[Z ]\bigg]+\beta ,
  \end{split}
\end{eqnarray}
\begin{eqnarray}\label{eq:4.9}
  \begin{split}
    R=-P \bigg[D^\top k +\bar  D^\top\mathbb E [k ]+N_2R+\bar N_2\mathbb E[R]\bigg]
    +\Phi,
  \end{split}
\end{eqnarray}
and
\begin{eqnarray} \label{eq:4.7}
  \begin{split}
&A P (k -\mathbb E[k ])+A \eta
  -B N_3^{-1} B^\top (k -\mathbb E [k ])+C (Z -\mathbb E [Z ])
  +\displaystyle\int_{E} D(R-\mathbb E [R])\nu (de)
  \\&=\dot{P}(k -\mathbb E[k ])-PA^\top (k -\mathbb E [k ])-P Q P (k -\mathbb E[k ])-PQ \eta +\gamma,
    \end{split}
\end{eqnarray}
which imply that
\begin{eqnarray} \label{eq:4.7}
  \begin{split}
&\bigg(\dot{P}-PA^\top-AP -PQP+BN_3^{-1}B^\top
\bigg)(k-\mathbb E[k])
  \\&-C(Z-\mathbb E [Z])
  -\displaystyle\int_{E} D(R-\mathbb E [R])\nu (de)
  -(A+PQ)\eta(t)+\gamma=0.
    \end{split}
\end{eqnarray}

Then  taking expectation on both sides of
\eqref{eq:4.8} and \eqref{eq:4.9},
we have the following relationships:

\begin{eqnarray} \label{eq:4.10}
  \begin{split}
  0=-(PN_1 +P\bar N_1 +I)\mathbb E[Z ]-P(C^\top +\bar  C ^\top)\mathbb E [k ]+\mathbb E[\beta ],
  \end{split}
\end{eqnarray}
\begin{eqnarray} \label{eq:4.11}
  \begin{split}
   0=-(PN_2+\bar PN_2+I)\mathbb E[R-P(D^\top+\bar  D^\top)\mathbb E [k ]+\mathbb E[\Phi],
  \end{split}
\end{eqnarray}
\begin{eqnarray} \label{eq:4.10}
  \begin{split}
  0=-(PN_1(t)+I)
  (Z-\mathbb E[Z])-PC^\top
  (k-\mathbb E [k])+(\beta-\mathbb E[\beta]),
  \end{split}
\end{eqnarray}
\begin{eqnarray} \label{eq:4.11}
  \begin{split}
   0=-(PN_2+I)\mathbb ( R-E[R])-PD^\top(k-\mathbb E [k])+  \Phi-\mathbb E[\Phi].
  \end{split}
\end{eqnarray}
Assume that $ PN_1+P\bar N_1+I, PN_2+\bar PN_2+I PN_2+I, PN_1+I$ are invertible, we get

\begin{eqnarray} \label{eq:3.28}
  \begin{split}
  \mathbb E[Z]=-(PN_1+P\bar N_1+I)^{-1} \bigg\{P(C^\top+\bar  C^\top)\mathbb E [k]-\mathbb E[\beta]\bigg\},
  \end{split}
\end{eqnarray}
\begin{eqnarray} \label{eq:3.29}
  \begin{split}
   \mathbb E[R]=-(PN_2+\bar PN_2 +I)^{-1}\bigg\{P(D^\top +\bar  D ^\top)\mathbb E [k]-\mathbb E[\Phi ]\bigg\},
  \end{split}
\end{eqnarray}
\begin{eqnarray} \label{eq:3.30}
  \begin{split}
  (Z -\mathbb E[Z ])=-(PN_1 +I)^{-1}\bigg\{PC^\top )
  (k -\mathbb E [k ])-(\beta -\mathbb E[\beta ])\bigg\},
  \end{split}
\end{eqnarray}
\begin{eqnarray} \label{eq:3.31}
  \begin{split}
   \mathbb ( R -E[R ])=-(PN_2 +I)^{-1}
   \bigg\{PD^\top (k -\mathbb E [k ]-(\Phi -\mathbb E[\Phi ])\bigg\}.
  \end{split}
\end{eqnarray}
Putting \eqref{eq:3.30} and 
\eqref{eq:3.31} into \eqref{eq:4.7}, we get
 that
\begin{eqnarray} \label{eq:3.32}
  \begin{split}
&\bigg(\dot{P}-PA^\top-AP -PQP+BN_3^{-1}B^\top
+ C(PN_1 +I)^{-1}PC^\top 
+\int_ED(PN_2 +I)^{-1}PD^\top 
\nu(de)\bigg)(k-\mathbb E[k])
  \\&-C(PN_1 +I)^{-1}(\beta -\mathbb E[\beta ])
  -\displaystyle\int_{E} D(PN_2 +I)^{-1}
   (\Phi -\mathbb E[\Phi ])\nu (de)
  -(A+PQ)\eta(t)+\gamma=0.
    \end{split}
\end{eqnarray}
from which one should let
\begin {equation}\label{eq:4.18}
\left\{\begin{array}{lll}
 &\dot{P}-PA^\top-AP -PQP+BN_3^{-1}B^\top
+ C(PN_1 +I)^{-1}PC^\top
+\int_ED(PN_2 +I)^{-1}PD^\top
\nu(de)
=0,
\\&  \gamma-C(PN_1 +I)^{-1}(\beta -\mathbb E[\beta ])
  -\displaystyle\int_{E} D(PN_2 +I)^{-1}
   (\Phi -\mathbb E[\Phi ])\nu (de)
  -(A+PQ)\eta =0.
 \end{array}
  \right.
  \end {equation}

Furthermore,  from
\eqref{eq:4.50} and \eqref{eq:4.1}, we have

\begin{eqnarray}
  \begin{split}
  &\bigg\{(A +\bar A )\mathbb E [Y ]
  -(B +\bar B (N +\bar N )^{-1}(B  +\bar B )^\top \mathbb E [k]+(C +\bar C )\mathbb E [Z ]
  +\displaystyle\int_{E} (D
  +\bar D) \mathbb E [R]\nu (de)\bigg\}ds
   \\&=d\mathbb E [Y ]
   \\&=d\Pi \mathbb  E[k ]
    +d\mathbb E[\varphi ]
   \\&=\bigg[\dot{\Pi}\mathbb E[k ]-\Pi(A^\top +\bar  A ^\top)\mathbb E [k ]-\Pi(Q +\bar Q )\mathbb E[Y ]+\mathbb E[\alpha ]\bigg]ds.
  \end{split}
\end{eqnarray}
Putting \eqref{eq:4.50}, \eqref{eq:3.28}
 and \eqref{eq:3.29} into the left hand
 of the above equality
 and comparing both sides  of the above equality, we get

 \begin{eqnarray} \label{eq:4.23}
  \begin{split}
  0&=\bigg\{\dot{\Pi}
  -\Pi(A^\top +\bar  A ^\top)-(A +\bar A )\Pi -\Pi(Q +\bar Q )\Pi 
+(B +\bar B )(N_3 +\bar N_3 )^{-1}(B +\bar B  )^\top
  \\&+(C +\bar C )(PN_1+P\bar N_1+I)^{-1} P(C^\top +\bar  C ^\top)+\displaystyle\int_{E} (D+\bar D ) (PN_2+\bar PN_2 +I)^{-1}P(D^\top +\bar  D ^\top)\bigg\}k
    \\&+\bigg [-\Pi(Q +\bar Q )-(A +\bar A )\bigg ]\mathbb E[\varphi ]+\mathbb E[\alpha ]-(C +\bar C )(PN_1 +P\bar N_1 +I)^{-1} \mathbb E[\beta ]
  \\&-\displaystyle\int_{E} (D
  +\bar D) (PN_2+P\bar N_2+I)^{-1}\mathbb E[\Phi]\nu (de).
  \end{split}
\end{eqnarray}
Hence, one should let
\begin {equation}\label{eq:3.36}
\left\{\begin{array}{lll}
 &\dot{\Pi}
  -\Pi(A^\top +\bar  A ^\top)-(A +\bar A )\Pi -\Pi(Q +\bar Q )\Pi
+(B +\bar B )(N_3 +\bar N_3 )^{-1}(B +\bar B  )^\top
  \\&+(C +\bar C )(PN_1+P\bar N_1+I)^{-1} P(C^\top +\bar  C ^\top)+\displaystyle\int_{E} (D+\bar D ) (PN_2+\bar PN_2 +I)^{-1}P(D^\top +\bar  D ^\top)=0,
 \\& \mathbb E[\alpha ]-\big [\Pi(Q +\bar Q )+(A +\bar A )\big]\mathbb E[\varphi ]-(C +\bar C )(PN_1 +P\bar N_1 +I)^{-1} \mathbb E[\beta ]
  -\displaystyle\int_{E} (D
  +\bar D) (PN_2+P\bar N_2+I)^{-1}\mathbb E[\Phi]\nu (de).
\end{array}
  \right.
  \end {equation}
  Moreover, comparing the terminal values on both sides of the two equations in \eqref{eq:4.50} and \eqref{eq:4.6}, one has   $$\Pi(T)=0, P(T)=0.$$
  Therefore, by \eqref{eq:4.18} and \eqref{eq:3.36}, we see that $P(\cdot)$ and $\Pi(\cdot)$ should satisfy the following Riccati-type
equations, respectively:

\begin {equation}\label{eq:4.37}
\left\{\begin{array}{lll}
 &\dot{P}-PA^\top-AP -PQP+BN_3^{-1}B^\top
+ C(PN_1 +I)^{-1}PC^\top
+\int_ED(PN_2 +I)^{-1}PD^\top
\nu(de)
=0,
\\& P(T)=0,
 \end{array}
  \right.
  \end {equation}
and 
\begin {equation}\label{eq:4.38}
\left\{\begin{array}{lll}
 &\dot{\Pi}
  -\Pi(A^\top +\bar  A ^\top)-(A +\bar A )\Pi -\Pi(Q +\bar Q )\Pi
+(B +\bar B )(N_3 +\bar N_3 )^{-1}(B +\bar B  )^\top
  \\&+(C +\bar C )(PN_1+P\bar N_1+I)^{-1} P(C^\top +\bar  C ^\top)+\displaystyle\int_{E} (D+\bar D ) (PN_2+\bar PN_2 +I)^{-1}P(D^\top +\bar  D ^\top)=0,
  \\&\Pi(T)=0,
\end{array}
  \right.
  \end {equation}
and $\varphi(t)$ should satisfy the following MF-BSDE on $[0,T]$:
\begin {equation}\label{eq:4.39}
\left\{\begin{array}{lll}
d\varphi=&\bigg\{(PQ+A)\varphi
+C(PN_1+I)^{-1}\beta+\displaystyle\int_{E} D(PN_2+I)^{-1} \Phi\nu (de)
\\&+(\bar A+\Pi(Q+\bar Q)-PQ)\mathbb E[\varphi]+\big[(C+\bar C)(PN_1+P\bar N_1+I)^{-1}-C(PN_1+I)^{-1}\big]\mathbb E[\beta]
\\&+\displaystyle\int_{E} \big[ (D
  +\bar D) (PN_2+\bar PN_2+I)^{-1}-D(PN_2+I)^{-1}\big]\mathbb E[\Phi]\nu (de)\bigg\}ds
\\&+\beta dW
    +\Phi d\tilde \mu(de,ds),
    \\
\varphi(T)=& \xi.
\end{array}
  \right.
  \end {equation}

By the same argument as in section 4 of [11], under Assumptions
 \ref{ass:1.1} and \ref{ass:1.2}, we can get that
Riccati equations \eqref{eq:4.37} and
\eqref{eq:4.38} have a unique solution,
respectively.

\subsection{Representations of optimal controls}
This section is going to give explicit formulas of the optimal controls and the value function,
via the solutions to the Riccati equations \eqref{eq:4.37}, \eqref{eq:4.38}, and the MF-BSDE \eqref{eq:4.39}. Now
 we state our main result as follows.
\begin{thm}
  Let Assumption\ref{ass:1.1} and \ref{ass:1.2} be satisfied.
 Let $P(\cdot)$ and $\Pi(\cdot)$ be
the unique solutions to the Riccati equations  \eqref{eq:4.37} and \eqref{eq:4.38} , respectively, and let  $(\varphi(\cdot),
\beta(\cdot), \Phi(\cdot))$
be the unique adapted solution to the MF-BSDE 
\eqref{eq:4.39}. Then the following MF-FSDE admits a unique solution $k(\cdot)$:

\begin {equation}\label{eq:3.40}
\left\{\begin{array}{lll}
dk &=&-\bigg[(A^\top +Q P)k +
(\bar  A ^\top-Q P
+(Q +\bar Q )\Pi )\mathbb E [k ]+Q \varphi+\bar Q \mathbb E[\varphi]\bigg]ds
- \bigg[\big[I-N_1 (PN_1+I)^{-1}P\big]
C^\top k\\&&+\big[\bar  C ^\top+N_1 (PN_1+I)^{-1}PC^\top-(N_1 +\bar N_1 )(PN_1+P\bar N_1+I)^{-1} P(C^\top+\bar  C^\top)\big]\mathbb E [k ]
\\&&-N_1 (PN_1+I)^{-1}
  (\beta-\mathbb E[\beta])-(N_1 +\bar N_1 )(PN_1+P\bar N_1+I)^{-1} \mathbb E[\beta]\bigg ]dW\\&&-\displaystyle\int_ E \bigg[\big[I-N_2(PN_2+I)^{-1}P\big]
D^\top k
+\big[\bar  D ^\top+2N_2 (PN_2+I)^{-1}PD^\top
\\&&-(N_2 +\bar N_2 )(PN_2+P\bar N_2+I)^{-1} P(D^\top+\bar  D^\top)\big]\mathbb E [k ]\\&&-N_2(PN_2+I)^{-1}
  (\Phi-\mathbb E[\Phi])-(N_2+\bar N_2)(PN_2+P\bar N_2+I)^{-1} \mathbb E[\Phi]\bigg\}\tilde{\mu}(d\theta, ds), s\in [t, T],
 \\k(t)&=&-(I+GP)^{-1}G(\varphi(t)-\mathbb E[\varphi(t)])-(I+(G+\bar G)\Pi)^{-1}(G+\bar G)\mathbb E[\varphi(t)],
\end{array}
  \right.
  \end {equation}
and the unique optimal control of Problem
  \ref{pro:1.1} for the terminal state $\xi$ is given by
\begin{eqnarray} \label{eq:3.41}
  \begin{split}
 u =&-N^{-1}_3 B^\top  \big[k+  \mathbb E [k]\big]- N_3^{-1}\bar N_3(N_3 +\bar N_3 )^{-1}(B+\bar B )^\top \mathbb E [k]
, \quad a.e. a.s.
     \end{split}
\end{eqnarray}

\end{thm}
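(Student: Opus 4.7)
The plan is to invert the formal derivation of the Riccati equations: first establish that the linear MF-FSDE \eqref{eq:3.40} admits a unique adapted solution $k$, then construct a candidate optimal quintuple $(u,Y,Z,R,k)$ from $k$ via the decoupling ansatz, and finally verify that this quintuple solves the stochastic Hamilton system \eqref{eq:3.7}, so that by Theorem \ref{thm:6.7} the control $u$ given by \eqref{eq:3.41} is the unique optimal control of Problem \ref{pro:1.1}.

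\textbf{Step 1 (Well-posedness of $k$).} Equation \eqref{eq:3.40} is a linear MF-SDE with jumps whose driving data are the processes $(\varphi,\beta,\Phi)$. By the assumed well-posedness of \eqref{eq:4.37} and \eqref{eq:4.38}, the matrix functions $P(\cdot)$ and $\Pi(\cdot)$ are continuous, hence uniformly bounded, on $[t,T]$. Combined with Assumption \ref{ass:1.1} and the standing invertibility assumptions on $PN_i+I$ and $PN_1+P\bar N_1+I$, $PN_2+P\bar N_2+I$ for $i=1,2$, all coefficients of \eqref{eq:3.40} are uniformly bounded, and $(\varphi,\beta,\Phi)$ lie in the appropriate spaces by the well-posedness of \eqref{eq:4.39}. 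A standard contraction-mapping argument analogous to Lemma \ref{lem:1.1} then yields a unique adapted solution $k(\cdot)\in S^2_{\mathscr{F}}(t,T;\mathbb{R}^n)$.

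\textbf{Step 2 (Construction and verification).} Define
\[
Y(s) := P(s)\bigl(k(s)-\mathbb{E}[k(s)]\bigr) + \Pi(s)\mathbb{E}[k(s)] + \varphi(s),
\]
let $Z, R$ be determined by the algebraic identities \eqref{eq:3.28}--\eqref{eq:3.31} in their mean and fluctuation parts, and let $u$ be given by \eqref{eq:3.41}. We must show that $(u,Y,Z,R,k)$ solves \eqref{eq:3.7}. The derivation preceding the theorem was formal: it assumed the decoupling ansatz and deduced what $P,\Pi,\varphi$ must be. Read in reverse, it becomes a verification: applying It\^o's formula to $P(k-\mathbb{E}[k]) + \Pi\mathbb{E}[k] + \varphi$ and invoking the Riccati identities \eqref{eq:4.37}--\eqref{eq:4.38} together with the BSDE \eqref{eq:4.39} recovers exactly the drift, diffusion, and jump coefficients of the forward equation in \eqref{eq:3.7}; the $dW$ and $d\tilde\mu$ parts coincide with $Z$ and $R$ by construction. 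The boundary conditions follow from $P(T)=\Pi(T)=0$, giving $Y(T)=\varphi(T)=\xi$, while the initial condition of $k$ in \eqref{eq:3.40} is algebraically equivalent to $k(t)=-GY(t)-\bar G \mathbb{E}[Y(t)]$ after solving the linear systems at $s=t$ for the mean and fluctuation parts separately. Finally, taking $\mathbb{E}$ in \eqref{eq:3.41} reproduces \eqref{eq:3.9}, and substituting back gives \eqref{eq:3.10}, which is the optimality relation \eqref{eq:3.1000}. Hence $(u,Y,Z,R,k)$ solves the stochastic Hamilton system \eqref{eq:3.7}.

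\textbf{Step 3 (Conclusion).} By the uniqueness part of Theorem \ref{thm:6.7}, the control component of any solution of \eqref{eq:3.7} is the unique optimal control of Problem \ref{pro:1.1}. Therefore the $u$ given by \eqref{eq:3.41} is that optimal control, and the theorem is proved. The main obstacle is the algebraic bookkeeping in Step 2: one must systematically track the mean and fluctuation components of $k$, use the Riccati identities \eqref{eq:4.37} and \eqref{eq:4.38} to cancel the quadratic-in-$k$ drift terms, and confirm that the different matrix inversions $(PN_i+I)^{-1}$ and $(PN_i+P\bar N_i+I)^{-1}$ interact consistently under taking expectations. Everything else is routine.
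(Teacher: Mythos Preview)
Your proposal is correct and follows essentially the same approach as the paper: define $Y$, $Z$, $R$ from $k$ via the decoupling ansatz, apply It\^{o}'s formula to reverse the formal derivation and verify that $(u,Y,Z,R,k)$ solves the stochastic Hamilton system \eqref{eq:3.7}, then invoke Theorem~\ref{thm:6.7} to conclude optimality. Your write-up is in fact more detailed than the paper's (which simply asserts the It\^{o} verification is ``easy to check''), particularly in explaining the well-posedness of \eqref{eq:3.40}, the recovery of the boundary conditions, and the reduction of \eqref{eq:3.41} to the optimality relation \eqref{eq:3.1000}.
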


\begin{proof}
Let $P(\cdot)$ and $\Pi(\cdot)$ be
the unique solutions to the Riccati equations  \eqref{eq:4.37} and \eqref{eq:4.38} , respectively, and let  $(\varphi(\cdot),
\beta(\cdot), \Phi(\cdot))$
be the unique adapted solution to the MF-BSDE
\eqref{eq:4.39}. It is clear that \eqref{eq:3.40} has a unique solution $k(\cdot)$. So we need only prove that
$u(\cdot)$
defined by \eqref{eq:3.41} is the unique optimal control of Problem \ref{pro:1.1} for the terminal state $\xi$. To this end, 
define
\begin{eqnarray}
  \begin{split}
    Y=:P(k-\mathbb E[k])
    +\Pi\mathbb  E[k]+\varphi,
  \end{split}
\end{eqnarray}

\begin{eqnarray} \label{eq:3.43}
  \begin{split}
  Z =:&-(PN_1 +I)^{-1}\bigg [PC^\top )
  (k -\mathbb E [k ])-(\beta -\mathbb E[\beta ])\bigg]-(PN_1 +P\bar N_1 +I)^{-1} \bigg [P(C^\top +\bar  C ^\top)\mathbb E [k ]-\mathbb E[\beta ]\bigg],
  \end{split}
\end{eqnarray}

\begin{eqnarray} \label{eq:4.11}
  \begin{split}
   R=:&-(PN_2+I)^{-1}
   \bigg [PD^\top(k-\mathbb E [k]-  (\Phi-\mathbb E[\Phi]\bigg]-(PN_2+ P\bar N_2+I)^{-1}\bigg[P(D^\top+\bar  D^\top)\mathbb E [k]-\mathbb E[\Phi]\bigg].
  \end{split}
\end{eqnarray}
  Then following the derivation of riccati 
  equation
in the previous subsection,
by applying It\^{o} formula to
$Y(s)=P(s)(k(s)-\mathbb E[k(s)])
    +\Pi(s)\mathbb  E[k(s)]+\varphi(s),$
it is easy to check that $(u(\cdot), Y(\cdot),Z(\cdot),
 R(\cdot, \cdot), k(\cdot))$ satisfies the following stochastic Hamilton system
\begin{numcases}{}\label{eq:4.45}
  dY(s)=\bigg\{A(s)Y(s)+\bar A(s)\mathbb E [Y(s)]
  +B(s)u(s)+\bar B(s)\mathbb E [u(s)]+C(s)Z(s)+\bar C(s)\mathbb E [Z(s)]\nonumber
  \\\quad\quad\quad\quad-\displaystyle\int_{E} D(s,e)R(s,e)\nu (de)
  +\displaystyle\int_{E}\bar D(s,e) \mathbb E [R(s,e)]\nu (de)\bigg\}ds+ Z(s)dW(s)+\displaystyle
  \int_{E} R(t,e)\tilde{\mu}(d\theta,
  ds), \nonumber\\
dk(s)=-\bigg[A(s)^\top k(s)+\bar  A(s)^\top\mathbb E [k(s)]+Q(s)Y(s)+\bar Q(s)\mathbb E[Y(s)]\bigg]ds\nonumber
\\\quad\quad\quad\quad
-\bigg[C(s)^\top k(s)+\bar  C(s)^\top\mathbb E [k(s)]+N_1(s)Z(s)+\bar N_1(s)\mathbb E[Z(s)]\bigg]dW(s)\nonumber
\\\quad\quad\quad\quad-\displaystyle\int_ E \bigg[D(s,e)^\top k(s)+\bar  D(s,e)^\top\mathbb E [k(s)]+N_2(s,e)R(s,e)+\bar N_2(s,e)\mathbb E[R(s,e)]\bigg]\tilde{\mu}(d\theta, ds),
 \\ Y(T)=\xi, k(t)=-GY(t)-\bar G\mathbb E[Y(t)], \nonumber
 \\ N_3(s)u(s)+\bar N_3(s)\mathbb E[u(s)]+B^\top(s)k({s-})+\bar B^\top(s) \mathbb E [k({s-})]=0, \quad s\in [t, T]. \nonumber
\end{numcases}
Thus,  by Theorem \ref{thm:6.7}, we know that $ u(\cdot)$ defined by \eqref{eq:3.41}  is the 
unique optimal control of Problem \ref{pro:1.1}.
The proof is complete.
\end{proof}

\end{document}